\DeclareMathSymbol{\mlq}{\mathord}{operators}{``}
\DeclareMathSymbol{\mrq}{\mathord}{operators}{`'}
\title{Enlargement of (fibered) derivators}
\date{June 29, 2017}
\author{Fritz H\"ormann\\ Mathematisches Institut, Albert-Ludwigs-Universit\"at Freiburg}
\newcommand*\numcirc[1]{\tikz[baseline=(char.base)]{
            \node[shape=circle,draw,inner sep=2pt] (char) {#1};}}
\newtheorem{SATZ}{Theorem}[section]
\newtheorem{LEMMA}[SATZ]{Lemma}
\newtheorem{DEF}[SATZ]{Definition}
\newtheorem{PROP}[SATZ]{Proposition}
\newtheorem{KOR}[SATZ]{Corollary}
\newtheorem{BEM}[SATZ]{Remark}
\newtheoremstyle{bare}        
  {}            
  {}            
  {\normalfont}                 
  {}                            
  {\bfseries}                   
  {}                            
  {.0em}                           
  {\thmnumber{#2}#1{\thmnote{ \normalfont(#3)}}. } 
\theoremstyle{bare}
\newtheorem{PAR}[SATZ]{}
\newcommand{\commentempty}[1]{}
\newcommand{\iso}{\stackrel{\sim}{\longrightarrow}}
\newcommand{\DD}{ \mathbb{D} }
\newcommand{\EE}{ \mathbb{E} }
\newcommand{\SSS}{ \mathbb{S} }
\DeclareMathOperator{\id}{id}
\DeclareMathOperator{\op}{op}
\DeclareMathOperator{\cart}{cart}
\DeclareMathOperator{\cocart}{cocart}
\DeclareMathOperator{\Hom}{Hom}
\DeclareMathOperator{\Fun}{Fun}
\DeclareMathOperator{\pr}{pr}
\DeclareMathOperator{\cor}{cor}
\DeclareMathOperator{\Cat}{Cat}
\DeclareMathOperator{\Dia}{Dia}
\DeclareMathOperator{\Dir}{Dir}
\DeclareMathOperator{\Inv}{Inv}
\DeclareMathOperator{\Pos}{Pos}
\DeclareMathOperator{\Posf}{Posf}
\DeclareMathOperator{\Dirpos}{Dirpos}
\DeclareMathOperator{\Invpos}{Invpos}
\begin{document}

\maketitle

{\footnotesize  {\em 2010 Mathematics Subject Classification:} 55U35, 18D10, 18D30, 18E30, 18G99  }

{\footnotesize  {\em Keywords:} fibered multiderivators, monoidal derivators, derivators }

\section*{Abstract}

We show that the theory of derivators (or, more generally, of fibered multiderivators) on {\em all small categories} is equivalent to this theory on {\em partially ordered sets}, in the following sense: Every derivator (more generally, every fibered multiderivator) defined on partially ordered sets has an enlargement to all small categories that is unique up to equivalence of derivators.  
Furthermore, extending a theorem of Cisinski, we show that every bifibration of multi-model categories (basically a collection of model categories, and Quillen adjunctions in several variables between them) gives rise to a left and right fibered multiderivator on all small categories. 

\section{Introduction}

Let $\mathcal{M}$ be a model category. Cisinski has shown in \cite{Cis03} that the pre-derivator associated with $\mathcal{M}$, defined {\em on all small categories},  is a left and right derivator. This does {\em not} use any additional properties of $\mathcal{M}$ such as being combinatorial, or left or right proper. 
In this article we show that the analogous statement holds true also for fibered multiderivators (in particular for fibered derivators and monoidal derivators). More precisely:
Recall  \cite[Definition 4.1.3]{Hor15} that a bifibration of multi-model categories is a bifibration $\mathcal{D} \rightarrow \mathcal{S}$ of multicategories together with the 
choice of model category structures on the fibers such that the push-forward and pull-back functors along any multimorphism in $\mathcal{S}$ form a Quillen adjunction in $n$ variables (and an additional condition concerning ``units'', i.e.\@ 0-ary push-forwards). 

\begin{SATZ}[Theorem~\ref{SATZCISINSKI}]
Let $\mathcal{D} \rightarrow \mathcal{S}$ be a bifibration of multi-model categories, and let $\SSS$ be the represented pre-multiderivator of $\mathcal{S}$. 
For each small category $I$ denote by $\mathcal{W}_I$ the class of morphisms in $\Fun(I, \mathcal{D})$ which point-wise are weak equivalences in some fiber $\mathcal{D}_S$ (cf.\@ \cite[Definition 4.1.2]{Hor15}). 
The association
\[ I \mapsto \DD(I) := \Fun(I, \mathcal{D})[\mathcal{W}^{-1}_I]  \]
defines a left and right fibered multiderivator over $\SSS$ with domain $\Cat$\footnote{Small categories}. 
Furthermore the categories $\DD(I)$ are locally small. 
\end{SATZ}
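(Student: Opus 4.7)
The overall strategy is to reduce to the case where the domain category is $\Pos$ and then invoke the enlargement theorem that forms the main result of this paper (and which is established earlier). So I would first construct a left and right fibered multiderivator $\DD^{\Pos}$ on the domain $\Pos$, with the same pointwise formula $\DD^{\Pos}(I) := \Fun(I,\mathcal{D})[\mathcal{W}_I^{-1}]$, and then appeal to the enlargement theorem to obtain (and in fact identify) $\DD$ on $\Cat$.

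For the construction on $\Pos$, the key technical input is the existence of Reedy-type (in particular both injective and projective) model structures on $\Fun(I, \mathcal{D}_S)$ for each poset $I$ and each $S \in \mathcal{S}$, inherited from the model category structure on the fiber $\mathcal{D}_S$. These assemble, via the bifibration structure of $\mathcal{D} \to \mathcal{S}$, into a bifibration of multi-model categories $\Fun(I,\mathcal{D}) \to \SSS$ over each object, because pushforward and pullback along a multimorphism of $\mathcal{S}$ act pointwise and by hypothesis form a Quillen adjunction in several variables on the fibers, hence also on the (Reedy-type) diagram categories. The axioms (Der1) and (Der2) for a fibered multiderivator are then essentially formal; for the existence and base change of homotopy (co)Cartesian arrows and of homotopy Kan extensions one uses that for a morphism $\alpha\colon I \to J$ of posets the slice categories $I_{/j}$ and $I_{j/}$ are again posets, so that the pointwise formula for homotopy Kan extensions may be computed using strict Kan extensions after Reedy (co)fibrant replacement. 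Compatibility with the bifibration structure (i.e.\@ the fibered axioms FDer0 left/right and the base change isomorphisms) follows from the Quillen-adjunction-in-several-variables hypothesis, which guarantees that pushforwards preserve Reedy cofibrant objects in the appropriate variables and hence descend to coherent isomorphisms in the localization.

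Having established the fibered multiderivator on $\Pos$, the enlargement theorem of this paper produces an essentially unique extension $\DD$ on $\Cat$. To identify this extension with the formula $\Fun(I,\mathcal{D})[\mathcal{W}_I^{-1}]$ for arbitrary small $I$, I would appeal to the universal property of localization together with the explicit formula for the enlargement in terms of homotopy limits over the diagram of poset pieces of $I$ (Bousfield--Kan / nerve-type resolution): a functor $I \to \mathcal{D}$ can be replaced, up to pointwise weak equivalence, by a diagram indexed by the simplex category of $I$, which is a poset-indexed diagram, and this resolution realizes the enlargement as the stated localization.

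The main obstacle will be the verification that homotopy Kan extensions computed via Reedy resolutions are naturally compatible with the pushforward/pullback functors of the bifibration, in a way that is sufficiently coherent for the fibered multi-setting; this is where the full strength of the Quillen adjunction in several variables is needed and where the most delicate bookkeeping of (co)fibrant replacements occurs. Local smallness of the categories $\DD(I)$ finally follows from a standard hammock-localization or function-complex argument on posets, where each $\Fun(I, \mathcal{D}_S)$ admits (by Reedy) a framing giving a set-sized model for $\Hom_{\DD(I)_S}$, and this property is preserved under the enlargement procedure.
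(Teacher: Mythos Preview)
Your strategy of invoking the enlargement theorem is exactly what the paper does, but there is a genuine gap in your choice of base diagram category. You propose to first establish the fibered multiderivator on $\Pos$ using ``Reedy-type (in particular both injective and projective) model structures on $\Fun(I,\mathcal{D}_S)$ for each poset $I$''. This step fails in the generality claimed: an arbitrary poset (e.g.\ $\mathbb{Z}$ with its usual order) is neither a direct nor an inverse category, so there is no Reedy structure available, and the existence of injective/projective model structures on $\Fun(I,\mathcal{D}_S)$ for arbitrary $I$ would require $\mathcal{D}_S$ to be combinatorial---precisely the hypothesis the theorem is meant to avoid (as the introduction emphasizes). The paper instead uses the pair $(\Inv \subset \Cat)$ for the left case and $(\Dir \subset \Cat)$ for the right case: on $\Inv$ (resp.\ $\Dir$) genuine Reedy model structures exist on the fibers with no extra assumptions, and the fact that $\DD|_{\Inv}$ is a left fibered multiderivator (resp.\ $\DD|_{\Dir}$ a right one) is quoted from \cite[Theorem~4.1.5]{Hor15}.

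Your identification step (``replace a functor $I\to\mathcal{D}$ by a diagram indexed by the simplex category of $I$'') is pointed in the right direction---this is exactly the role of $N(I)$---but you are missing the concrete mechanism. The paper proves (Proposition~\ref{CENTRALPROP}, going back to Deligne) that $\pi_I^*$ induces an equivalence $\Fun(I,\mathcal{D})_S[\mathcal{W}_{(I,S)}^{-1}] \simeq \DD(N(I))^{\pi_I\text{-}\cart}_{\pi_I^*S}$ by analyzing the \emph{underived} adjunction $\pi_{I,!}^{(S)} \dashv \pi_I^*$: one checks that $\pi_{I,!}$ admits an absolute left derived functor (via Reedy-cofibrant replacement on $N(I)$, which \emph{is} inverse), and then that the derived unit and counit are isomorphisms on $\pi_I$-Cartesian objects using Lemma~\ref{BASICLEMMA}.3. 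Together with the explicit replacement functors $\widetilde{Q}=\pi_{I,!}Q\pi_I^*$ and $\widetilde{R}=\widetilde{\pi}_{I,*}R\widetilde{\pi}_I^*$ of Lemma~\ref{LEMMAREPLACEMENT}, this gives the equivalence $\DD\cong\EE^{\mathrm{left}}$ (and dually $\DD\cong\EE^{\mathrm{right}}$), and local smallness comes along for free since $\EE(I)_S$ is a full subcategory of the locally small $\DD(N(I))_{\pi_I^*S}$. Once you replace $\Pos$ by $\Inv$/$\Dir$ and supply this Deligne-style comparison, your outline becomes the paper's proof.
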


Along the lines we also prove that a left (or right) fibered multiderivator defined on a smaller diagram category can be {\em intrinsically} enlarged to a larger diagram category, whenever some nerve-like construction is available, relating the two diagram categories. This is more of theoretical interest because most derivators occurring in nature come from model categories. One concrete result of the construction is the following:

\begin{KOR}
Let $\DD \rightarrow \SSS$ be a left (resp.\@ right) fibered multiderivator with domain $\Invpos$\footnote{Inverse posets} (resp.\@ $\Dirpos$\footnote{Directed posets}) 
such that $\SSS$ is defined on $\Cat$ and such that also (FDer0 right) (resp.\@ (FDer0 left)) holds.
Then there exists an enlargement of $\DD$ to a left (resp.\@ right) fibered multiderivator $\EE \rightarrow \SSS$ with domain $\Cat$, such that its restriction to $\Invpos$ (resp.\@ $\Dirpos$)
is equivalent to $\DD$. Any other such enlargement is equivalent to $\EE$. 
\end{KOR}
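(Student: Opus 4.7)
The plan is to realize the enlargement via the general intrinsic construction announced earlier in the introduction: each small category $I$ is replaced by an inverse poset $\tilde I$ equipped with a functor $p_I\colon \tilde I\to I$ that is ``derivator-cofinal'' and functorial in $I$. A canonical choice is the subdivision, i.e.\ the poset of non-degenerate simplices of the nerve $N(I)$ with $p_I$ sending $\sigma\colon [n]\to I$ to $\sigma(0)$. Since $\DD$ is defined on $\Invpos$, the value $\DD(\tilde I)$ makes sense for every $I\in\Cat$, and this is the data I want to feed into an enlargement of $\DD$.

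I would then set
\[ \EE(I) \;:=\; \DD(\tilde I)^{\mathrm{desc}}, \]
the full subcategory of $\DD(\tilde I)$ cut out by a descent condition encoding the simplicial identities (equivalently, requiring the restrictions along degeneracies to be coCartesian over $\SSS$). Restriction along $f\colon I\to J$ is then induced by $\tilde f\colon \tilde I\to\tilde J$; left Kan extensions along $f$ are computed inside $\DD$ via the corresponding Kan extensions along $\tilde f$, which exist because $\tilde f$ is a morphism of inverse posets. The fibered structure of $\EE\to\SSS$ lifts from that of $\DD$; the opposite variance, needed to upgrade $\EE$ to a bifibration on all of $\Cat$, is supplied by the auxiliary hypothesis (FDer0 right) together with the fact that $\SSS$ is already given on $\Cat$. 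For an inverse poset $I$, the identity $\id_I$ is terminal among inverse-poset resolutions of $I$, so the descent subcategory of $\DD(\tilde I)$ collapses to $\DD(I)$ up to equivalence, yielding $\EE|_{\Invpos}\simeq \DD$.

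Uniqueness follows from the same principle: any enlargement $\EE'$ of $\DD$ to $\Cat$ must satisfy
\[ \EE'(I) \;\simeq\; \mathrm{hocolim}_{J\to I,\, J\in \Invpos}\, \DD(J), \]
because the derivator axioms for $\EE'$ express $I$ as the homotopy colimit of its simplices. The right-hand side is precisely what the construction of $\EE(I)$ computes after unwinding the descent condition, so combined with $\EE'|_{\Invpos}\simeq \DD$ this forces $\EE'\simeq \EE$.

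The main obstacle is the verification of (FDer3) and (FDer5) for $\EE$ on $\Cat$: one must show that the Kan extension along a functor $f\colon I\to J$ defined through subdivisions satisfies the pointwise formula and is compatible with base change. This reduces to a cofinality statement asserting that for every $j\in J$ the natural comparison $\widetilde{f/j}\to \tilde f/\tilde j$ is derivator-cofinal in $\DD$. This cofinality is the technical heart of the general enlargement theorem, and it is its multi-variable and $\SSS$-relative refinement that delivers the fibered multiderivator version required here.
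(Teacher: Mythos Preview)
Your overall strategy---replace $I$ by a nerve-type diagram in the smaller domain and cut out the subcategory of $\pi_I$-Cartesian objects---is precisely the paper's approach. But the one-step version you propose has a genuine gap.

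The problem is that for a general $I\in\Cat$ the subdivision $\tilde I$ is \emph{not} an inverse poset, so $\DD(\tilde I)$ is undefined. Take $I=B\mathbb{Z}$: the non-degenerate $2$-simplex $(1,1)$ has two distinct face maps to the non-degenerate $1$-simplex $(1)$ (namely $d_0$ and $d_2$), so $\tilde I$ has parallel non-equal morphisms and is not a poset. Worse, non-degenerate simplices are not closed under faces once identities factor non-trivially: in $B(\mathbb{Z}/2)$ the face $d_1$ of the non-degenerate $2$-simplex $(g,g)$ is $(g^2)=(\id)$, which is degenerate. The upshot is that the assignment $I\mapsto\tilde I$ lands in $\Invpos$ only for $I\in\Cat^\circ$ (categories in which no identity factors through non-identities).

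The paper's fix is to apply the enlargement machine of Theorem~\ref{MAINTHEOREM} \emph{twice}. First use the non-degenerate nerve for the pair $(\Invpos\subset\Cat^\circ)$; here $N(I)$ really is an inverse poset. This produces an enlargement to $\Cat^\circ$, which in particular restricts to a fibered multiderivator on $\Inv\subset\Cat^\circ$. Then use the full semi-simplicial nerve for the pair $(\Inv\subset\Cat)$; here $N(I)$ is an inverse category but typically not a poset, which is now acceptable. Uniqueness is read off from the uniqueness clause of Theorem~\ref{MAINTHEOREM} at each stage, rather than from a hocolim formula; the latter would itself require justification inside the derivator formalism.

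Two smaller points. Your projection $p_I(\sigma)=\sigma(0)$ is the first-vertex map, which is the choice adapted to the \emph{right}/directed case; for the left/inverse case the paper uses the last-vertex map $\sigma\mapsto\sigma(n)$, and this orientation is what makes axiom (N4~left) hold. Also, the correct descent condition is ``$\pi_I$-Cartesian'' (morphisms over identities in $I$ become Cartesian), not a condition on degeneracies---the construction uses the semi-simplicial nerve, which has no degeneracies.
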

Note that this holds, in particular, for usual derivators (take for $\SSS$ the final pre-derivator) and closed monoidal derivators (take for $\SSS$ the final pre-multiderivator).
\begin{proof}
In the left case, apply the machine of Theorem~\ref{MAINTHEOREM} twice using the functors $N$ constructed in Proposition~\ref{PROPCONSTRN}, firstly for the pair $(\Invpos\subset \Cat^\circ)$, and secondly for
the pair $(\Inv \subset \Cat)$. Similarly for the right case.
\end{proof}

If we start with a fibered multiderivator on {\em all of $\Pos$}, however, we show that
the two extensions to $\Cat$ agree. Therefore we arrive at the following 
\begin{KOR}
Let $\DD \rightarrow \SSS$ be a left and right fibered multiderivator with domain $\Pos$ such that $\SSS$ is defined on $\Cat$.
Then there exists a canonical enlargement of $\DD$ to a left and right fibered multiderivator $\EE \rightarrow \SSS$ with domain $\Cat$, such that its restriction to $\Pos$
is equivalent to $\DD$. The enlargement is unique up to equivalence of fibered multiderivators over $\SSS$. 
\end{KOR}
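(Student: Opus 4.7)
The plan is to apply the preceding corollary twice---once in its left form and once in its right form---and then argue that the two resulting extensions to $\Cat$ must be canonically equivalent.

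First, restricting $\DD$ to $\Invpos$ and invoking the previous corollary produces a left fibered multiderivator $\EE_L \to \SSS$ with domain $\Cat$ whose restriction to $\Invpos$ is equivalent to $\DD|_{\Invpos}$. Dually, restriction to $\Dirpos$ yields a right fibered multiderivator $\EE_R \to \SSS$ with domain $\Cat$ whose restriction to $\Dirpos$ is equivalent to $\DD|_{\Dirpos}$.

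The heart of the proof is to show that $\EE_L$ and $\EE_R$ each restrict to $\DD$ on all of $\Pos$, not merely on $\Invpos$ or $\Dirpos$. For this I would apply Theorem~\ref{MAINTHEOREM} to the single-step inclusion $\Invpos \subset \Pos$, using a nerve-type functor produced as in Proposition~\ref{PROPCONSTRN}; the uniqueness clause then says that any left fibered multiderivator on $\Pos$ enlarging $\DD|_{\Invpos}$ is determined up to equivalence. Since both $\DD$ itself (which is left fibered by hypothesis) and $\EE_L|_{\Pos}$ are such enlargements, one concludes $\EE_L|_{\Pos} \simeq \DD$, and an analogous argument in the right case yields $\EE_R|_{\Pos} \simeq \DD$.

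Once $\EE_L$ and $\EE_R$ are known to agree with $\DD$ on all of $\Pos$, the uniqueness portion of the enlargement machine---applied one more time to the pair $\Pos \subset \Cat$---delivers an equivalence $\EE_L \simeq \EE_R$ over $\SSS$. The common extension $\EE$ is then simultaneously left fibered (inherited from $\EE_L$) and right fibered (inherited from $\EE_R$), and uniqueness of the left-and-right enlargement follows from the uniqueness of each of $\EE_L$ and $\EE_R$ separately. I expect the most delicate step to be the identification $\EE_L|_{\Pos} \simeq \DD$: the enlargement $\EE_L$ is constructed from $\DD|_{\Invpos}$ via the intermediate category $\Cat^\circ$ rather than directly through $\Pos$, so one must verify that an appropriate nerve functor $N$ is indeed available for the pair $(\Invpos \subset \Pos)$ (and dually for $(\Dirpos \subset \Pos)$) so that the main theorem applies and produces the required uniqueness at that stage.
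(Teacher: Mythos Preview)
Your strategy breaks down at the final comparison step. You want to invoke ``uniqueness for the pair $(\Pos \subset \Cat)$'' to conclude $\EE_L \simeq \EE_R$, but this fails for two reasons.

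First, no nerve functor $N$ is available for the pair $(\Pos \subset \Cat)$. Proposition~\ref{PROPCONSTRN} supplies $N$ for $(\Inv \subset \Cat)$ and $(\Dir \subset \Cat)$, but $\Inv$ and $\Dir$ are not contained in $\Pos$ (inverse or directed categories need not be posets), so these do not restrict to functors $\Cat \to \Pos$. The constructions for $(\Invpos \subset \Cat^\circ)$ and $(\Dirpos \subset \Cat^\circ)$ do land in $\Pos$, but their domain is only $\Cat^\circ$, not $\Cat$.

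Second, and more fundamentally, the uniqueness clause of Theorem~\ref{MAINTHEOREM} only compares two \emph{left} enlargements (or two \emph{right} enlargements). Your $\EE_L$ is a left fibered multiderivator and $\EE_R$ is a right one; nothing in the main theorem lets you compare a left enlargement with a right enlargement. The tool that does this is Proposition~\ref{PROPLEFTRIGHT}, and it requires \emph{simultaneously} a left $N$ and a right $\widetilde{N}$ for the \emph{same} pair $(\Dia' \subset \Dia)$, together with the comma condition $N(I)\times_{/I}\widetilde{N}(I)\in\Dia'$.

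This is exactly why the paper proceeds in two stages through the intermediate diagram category $\Cat^\circ$: for each of the pairs $(\Pos \subset \Cat^\circ)$ and $(\Cat^\circ \subset \Cat)$ one has both a left and a right $N$ (by enlarging $\Dia'$ in the four cases of Proposition~\ref{PROPCONSTRN}), so Proposition~\ref{PROPLEFTRIGHT} applies at each stage and the left and right extensions agree, yielding a genuine left \emph{and} right fibered multiderivator at each step. Your route through $\EE_L$ and $\EE_R$ separately never puts you in a position to invoke that proposition.
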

Actually, here $\Pos$ can be even replaced by the smallest diagram category containing both $\Invpos$ and $\Dirpos$. 
\begin{proof}
We consider this time the pairs $(\Pos \subset \Cat^\circ)$ and $(\Cat^\circ \subset \Cat)$. For each of these pairs we dispose of functors $N$ as in \ref{PARAXIOMS} for the left and the right case simultaneously by Proposition~\ref{PROPCONSTRN} (by enlarging $\Dia'$ we only weaken the axioms). Hence we may conclude by applying Proposition~\ref{PROPLEFTRIGHT} twice.  
\end{proof}

For the reader mainly interested in plain (left and right) derivators, we state explicitly:
\begin{KOR}\label{KORDER}
Let $\DD$ be a derivator with domain $\Pos$. Then there exists a canonical enlargement of $\DD$ to a derivator $\EE$ with domain $\Cat$, such that its restriction to $\Pos$ is equivalent to $\DD$. The enlargement is unique up to equivalence of derivators. 
\end{KOR}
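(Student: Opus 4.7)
The plan is to obtain this as a straightforward specialization of the preceding corollary. A derivator with domain $\Pos$ is, by definition (and unraveling the multicategorical terminology), the same datum as a left and right fibered multiderivator with domain $\Pos$ over the final pre-multiderivator $\SSS$, namely the one sending every small category to the terminal multicategory. Since this final $\SSS$ is tautologically defined on all of $\Cat$, the hypotheses of the preceding corollary are automatically met.

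First, I would make the identification between $\DD$ (viewed as a derivator on $\Pos$) and the corresponding fibered multiderivator over the final $\SSS$ with domain $\Pos$ explicit, just enough to confirm that it sends restrictions to restrictions and equivalences to equivalences. Second, I would apply the preceding corollary verbatim to produce a canonical enlargement $\EE \to \SSS$ to a left and right fibered multiderivator with domain $\Cat$ whose restriction to $\Pos$ is equivalent to $\DD \to \SSS$. Third, I would translate $\EE \to \SSS$ back through the identification to obtain a derivator $\EE$ on $\Cat$ restricting to $\DD$ on $\Pos$, and note that uniqueness of $\EE$ up to equivalence of derivators follows from the uniqueness clause of the preceding corollary, as equivalences of fibered multiderivators over the trivial base $\SSS$ are nothing but equivalences of the underlying (derivator-valued) fibers.

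There is essentially no main obstacle here: the content of the statement is entirely absorbed into the preceding corollary, and the work remaining is purely terminological, consisting in observing that ``derivator'' is the special case of ``fibered multiderivator over the final pre-multiderivator'' and that all relevant axioms, restrictions, and notions of equivalence pass through this identification in the obvious way.
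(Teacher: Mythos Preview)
Your approach is correct and matches the paper's: Corollary~\ref{KORDER} is stated without separate proof precisely because it is the specialization of the preceding corollary to the trivial base. One small terminological slip: for a plain derivator you want $\SSS$ to be the final \emph{pre-derivator} (each $\SSS(I)$ the terminal category), not the final pre-\emph{multi}derivator; over the latter a fibered multiderivator is a (closed) monoidal derivator rather than a bare derivator, as the paper notes earlier in Section~2.
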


Thanks to Falk Beckert for pointing out that a weaker statement in the direction of Corollary~\ref{KORDER} has been proven by Jan Willing \cite{Wil95} in a diploma thesis under the supervision of Jens Franke. There, only stable derivators were considered under the name ``verfeinerte triangulierte Diagrammkategorien''.

\section{Relating different diagram categories via nerve-like constructions}

Let $\Dia$ be a diagram category, cf.\@ \cite[Definition 1.1.1]{Hor15}. In contrast to Axiom~(Dia3) of [loc.\@ cit.], in this article we require that $\Dia$ permits the construction 
of comma categories $I \times_{/J} K$ for arbitrary functors $\alpha: I \rightarrow J$ and $\beta: K \rightarrow J$ in $\Dia$. 
We assume that the reader is, to some extent, familiar with the definition of {\em fibered multiderivator} \cite[Section 1.2--3]{Hor15}.
The reader mainly interested in usual derivators or monoidal derivators can let $\SSS$ be the final pre-derivator (resp.\@ the final pre-multiderivator) and
then a ``left (resp.\@ right) fibered multiderivator over $\SSS$'' is just a left (resp.\@ right) derivator (resp.\@ a monoidal left (resp.\@ closed right) derivator). 

Recall the following \cite[Definition 2.4.1]{Hor15}:
\begin{DEF} Let $\DD \rightarrow \SSS$ be a (left or right) fibered derivator with domain $\Dia$. 
Let $I, E \in \Cat$ be diagrams with $I \in \Dia$ and let $\pi: I \rightarrow E$ be a functor. 
We say that an object \[ X \in \DD(I)\] is $\pi$-{\bf (co)Cartesian}, if for any morphism
$\mu: i \rightarrow j$ in $I$ mapping to an identity under $\pi$, the corresponding morphism
$\DD(\mu): i^*X \rightarrow j^*X$ is (co-)Cartesian. 

If $E$ is the trivial category, we omit $\pi$ from the notation, and talk about (absolutely) (co-)Cartesian objects. 
\end{DEF}

Note: If $\SSS$ is trivial or if $X$ lies over an object of the form $\pi^*S$ for $S \in \SSS(E)$ the notions $\pi$-coCartesian and $\pi$-Cartesian coincide. 

\begin{DEF}[cf.\@ {\cite[Definition~3.3.1]{Hor15}}] \label{DEFPROJECTOR}
Let $\DD \rightarrow \SSS$  and $\pi: I \rightarrow E$ be as in the previous Definition and let $S \in \SSS(I)$. If the
fully-faithful inclusion
\[ \DD(I)_{S}^{\pi-\cart} \hookrightarrow \DD(I)_{S} \qquad \DD(I)_{S}^{\pi-\cocart} \hookrightarrow \DD(I)_{S} \]
has a left (resp.\@ right) adjoint, we call that adjoint a {\bf left (resp.\@ right) (co)Cartesian projector}, denoted $\Box_!^\pi$ (resp.\@ $\Box_*^\pi$).
In this article it is always clear from the context, whether Cartesian or coCartesian objects are considered hence we will not use the notation $\blacksquare_!, \blacksquare_*$ from [loc.\@ cit.].
\end{DEF}

We want to extend (left or right) fibered multiderivators $\DD \rightarrow \SSS$ from a diagram category $\Dia'$ to a larger diagram category $\Dia$. 
Here $\SSS$ can be any pre-multiderivator (or even a 2-pre-multiderivator) satisfying (Der1) and (Der2). 
We assume that $\SSS$ is already defined on the larger diagram category $\Dia$ and that $\DD$ is a left (resp.\@ right) fibered multiderivator
over $\SSS$ such that also (FDer0 right)\footnote{at least when {\em neglecting} the multi-aspect} resp.\@ (FDer0 left) hold true. 

\begin{PAR}\label{PARAXIOMS}
Let $\Dia' \subset \Dia$ be diagram categories. 
We suppose given a functor
\[ N: \Dia \rightarrow \Dia'  \]
in which, forgetting 2-morphisms, $\Dia$ and $\Dia'$ are considered to be usual 1-categories,
together with a natural transformation
\[ \pi: N \Rightarrow \id \]
with the following properties:
\begin{enumerate}
\item[(N1)] For all $I \in \Dia$, the comma category $N(I) \times_{/I} N(I)$ (formed w.r.t.\@ the functors $\pi_I$) is in $\Dia'$ as well.
\item[(N2)] For all $I, J \in \Dia$, we have $N(I \coprod J) = N(I) \coprod N(J)$. Furthermore $N(\emptyset) = \emptyset$. 
\item[(N3)] For all $I \in \Dia$, $\pi_I$ is surjective on objects and morphisms, and has connected fibers.   
\item[(N4 left)] For any functor $\alpha: I \rightarrow J$ in $\Dia$ and for any object $j \in J$  the diagram
\[ \xymatrix{
N(I \times_{/J} j) \ar@{}[rd]|\Swarrow \ar[r] \ar[d] & N(I) \ar[d]^{ \alpha \circ \pi_I} \\
j  \ar[r] & J
} \]
is 2-Cartesian (i.e.\@ identifies the top left category with the corresponding comma category). 
 
\item[(N5 left)] For any pre-derivator $\DD$ satisfying (Der1) and (Der2) and for all $I \in \Dia$ with final object $i$ the functors
\[ \xymatrix{ \DD(N(I))^{\pi_I-\cart} \ar@<5pt>[rr]^-{n^*} & & \ar@<5pt>[ll]^-{p^*} \DD(\cdot) }  \]
form an adjunction, with $n^*$ left adjoint, where $n$ is some object with $\pi_I(n)=i$ and $p: N(I) \rightarrow \cdot$ is the projection. 
Furthermore the counit of the adjunction is the natural isomorphism and the unit  is an isomorphism on (absolutely) Cartesian objects. 
\end{enumerate}
\end{PAR}

Some immediate consequences of the axioms are listed in the following: 
\begin{LEMMA}[left]\label{BASICLEMMA}
\begin{enumerate}
\item Property (N5 left) is true w.r.t.\@ any object $n$ with $\pi_I(n)=i$.  
\item Let $\DD \rightarrow \SSS$ be a left fibered derivator satisfying also (FDer0 right)\footnote{at least when {\em neglecting} the multi-aspect}.
Let $J \in \Dia'$, let
$I \in \Dia$ with final object $i$, and let $S \in \SSS(I \times J)$ be an object. Let $n \in N(I)$ with $\pi_I(n)=i$. 
We 
denote
\begin{eqnarray*}
\pi_{I,J}:=(\pi_I \times \id_J) &:& N(I) \times J \rightarrow I \times J \\
i_J:=(i\times \id_J) &:& J \rightarrow I \times J \\
p_J :=(p \times \id_J)&:& N(I) \times J \rightarrow J \\
n_J:=(n \times \id_J)&:& J \rightarrow N(I) \times J \\
f:=\SSS(\nu)(S)&:& S \rightarrow p_J^* i_J^*S 
\end{eqnarray*}
where $\nu: \id_{I \times J} \Rightarrow i_J p_J$ is the obvious natural transformation.
The functors
\[ \xymatrix{ \DD(N(I) \times J)^{\pi_{I,J}-\cart}_{\pi_{I,J}^*S} \ar@<5pt>[rr]^-{n_J^*} & & \ar@<5pt>[ll]^-{(\pi_{I,J}^*f)^\bullet p_J^*} \DD(J)_{(i_J)^*S} }  \]
form an adjunction, with $n^*_J$ left adjoint. The counit is the natural isomorphism and the unit is an isomorphism on $\pr_2$-Cartesian objects. 

\item The natural morphism $n^*_J \rightarrow p_{J,!} (\pi_{I,J}^*f)_\bullet$ is an isomorphism on the subcategory $\DD(N(I)\times J)^{\pi_{I,J}-\cart}_{\pi_{I,J}^*S}$. 
\item The functor $(\pi_{I,J}^*f)^\bullet p_J^* n_J^*$ defines a left $p_J$-Cartesian projector (i.e.\@ a left adjoint to the inclusion)
\[ \DD(N(I) \times J )^{\pi_{I,J}-\cart}_{\pi_{I,J}^*S} \rightarrow \DD(N(I) \times J)^{p_J-\cart}_{\pi_{I,J}^*S}.  \]
\end{enumerate}
\end{LEMMA}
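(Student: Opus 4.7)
For (i), I would use (N3): given $n, n' \in N(I)$ with $\pi_I(n) = \pi_I(n') = i$, the connected fiber $\pi_I^{-1}(i)$ supplies a zigzag of morphisms between $n$ and $n'$ each mapping to an identity of $I$. For any $X \in \DD(N(I))^{\pi_I-\cart}$, these morphisms induce Cartesian morphisms in $\DD$ lying over identities in $\SSS$, hence isomorphisms, giving a natural isomorphism $n^* \cong n'^*$ on the $\pi_I$-Cartesian subcategory. The adjunction of (N5 left) therefore transports from one choice of $n$ to any other.

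For (ii), I would parametrize (N5 left) over $J$. The morphism $f : S \to \pr_J^* i_J^* S$ arising from $\nu$ and its pullback $\pi_{I,J}^* f$ furnish a candidate unit and counit for the pair $(n_J^*, (\pi_{I,J}^* f)^\bullet p_J^*)$. The counit is an identity because $n_J^* p_J^* = \id$ and $n_J^*(\pi_{I,J}^* f) = i_J^* f = \id$, the latter using $\pr_J \circ i_J = \id$ and the fact that $\nu$ restricts to an identity at $i$. For the unit, (FDer0 right) combined with (N4 left) (applied to $j : \cdot \hookrightarrow J$) reduces the triangle identities and the claim that the unit is an isomorphism on $\pr_2$-Cartesian objects to the corresponding pointwise statements in the absolute case, which is precisely (N5 left). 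Axiom (Der2) then lets us check isomorphy pointwise and thereby glue the fiberwise data into a global natural transformation.

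For (iii), the map $n_J^* \to p_{J,!}(\pi_{I,J}^* f)_\bullet$ is obtained as a mate: applying $n_J^*$ to the unit $(\pi_{I,J}^* f)_\bullet X \to p_J^* p_{J,!}(\pi_{I,J}^* f)_\bullet X$ of the $(p_{J,!} \dashv p_J^*)$-adjunction, and using $n_J^* p_J^* = \id$ together with the base-change identification $n_J^* (\pi_{I,J}^* f)_\bullet X \cong (i_J^* f)_\bullet n_J^* X = n_J^* X$, yields the desired morphism. On $\pi_{I,J}$-Cartesian $X$, I would verify it is an isomorphism by a further pointwise reduction over $J$ via (FDer0 right): at each $j \in J$ the map becomes $n^*(j^* X) \to p_!((\pi_I^* j^* f)_\bullet j^* X)$, and this is the formula expressing that the left adjoint of $p^*$, evaluated on a Cartesian object, coincides with $n^*$, which is exactly (N5 left) combined with (i). Given (iii), part (iv) follows formally: the functor $(\pi_{I,J}^* f)^\bullet p_J^* n_J^*$ lands in $\DD(N(I) \times J)^{p_J-\cart}_{\pi_{I,J}^* S}$ since $p_J^*(-)$ is $p_J$-Cartesian by (FDer0 left) and $(\pi_{I,J}^* f)^\bullet$ preserves $p_J$-Cartesianity (the morphism $\pi_{I,J}^* f$ being itself $p_J$-Cartesian in $\SSS$); and it is left adjoint to the inclusion $\DD(N(I) \times J)^{p_J-\cart}_{\pi_{I,J}^* S} \hookrightarrow \DD(N(I) \times J)^{\pi_{I,J}-\cart}_{\pi_{I,J}^* S}$ by combining (ii) with the observation, proved along the lines of (iii), that its restriction to the $p_J$-Cartesian subcategory is naturally isomorphic to the identity.

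The principal technical obstacle is (iii): the mate constructed above interpolates between an adjunction in the diagram direction (via $p_{J,!}$) and one in the base direction (via $(-)_\bullet$), and checking it is an isomorphism relies on (N5 left) only through a pointwise reduction. Verifying that base change in the diagram and base change in the fibered structure commute coherently is the main content, and the hypothesis that $\DD$ is fibered in both variances (i.e.\@ (FDer0 right) as well as the left structure) is precisely what makes the reduction work.
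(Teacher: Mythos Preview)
Your argument for (i) matches the paper's. For (ii)--(iv) your strategy is workable but takes a longer route than the paper's, and one ingredient you invoke is misplaced.

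The reference to (N4 left) in your proof of (ii) does not fit: that axiom compares $N(I \times_{/J} j)$ with the comma category $N(I) \times_{/J} j$ for a functor $\alpha: I \to J$ in $\Dia$, but here you are working with the \emph{product} $N(I) \times J$ (with $J \in \Dia'$ untouched by $N$), and there is no functor $I \to J$ in play. What actually makes (ii) work is the observation you overlooked: axiom (N5 left) is stated for an \emph{arbitrary} pre-derivator satisfying (Der1) and (Der2), so one may apply it directly to the fiber pre-derivator $K \mapsto \DD(K \times J)_{\pr_2^* i_J^* S}$. This yields the adjunction $n_J^* \dashv p_J^*$ between $\DD(N(I)\times J)^{\pi_{I,J}-\cart}_{p_J^* i_J^* S}$ and $\DD(J)_{i_J^* S}$ in one stroke, with no pointwise reduction. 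The adjunction of (ii) is then simply the composite of this with the push-forward/pull-back adjunction $(\pi_{I,J}^* f)_\bullet \dashv (\pi_{I,J}^* f)^\bullet$.

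With (ii) established this way, (iii) is immediate by uniqueness of left adjoints: both $n_J^*$ and $p_{J,!}(\pi_{I,J}^* f)_\bullet$ are left adjoint to $(\pi_{I,J}^* f)^\bullet p_J^*$ on the $\pi_{I,J}$-Cartesian subcategory, hence canonically isomorphic. Your construction of the comparison map as a mate followed by a pointwise base-change verification is correct, but it is reproving uniqueness of adjoints by hand. For (iv), the paper packages the argument into a general lemma on idempotent monads (Lemma~\ref{LEMMAMONAD}): any monad whose underlying endofunctor lands in a full subcategory and whose unit is an isomorphism there is automatically left adjoint to the inclusion. Applied to the monad $(\pi_{I,J}^* f)^\bullet p_J^* n_J^*$ arising from the adjunction of (ii), this gives (iv) directly. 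Your direct argument for (iv) is essentially this lemma unpacked in the particular case.
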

\begin{proof}
1. The fiber over $i$ in $N(I)$ is connected (N3). Hence on 
the subcategory $\DD(N(I))^{\pi_I-\cart}$ all functors $n^*$ for $n \in N(I)$ with $\pi_I(n)=i$ are isomorphic. 
Any of them can be thus taken as adjoint. 

2. The adjunction in question is the composition of the adjunctions
\[ \xymatrix{ \DD(N(I)\times J)^{\pi_{I,J}-\cart}_{\pi_{I,J}^*S} \ar@<5pt>[rr]^-{(\pi_{I,J}^*f)_\bullet } & & \ar@<5pt>[ll]^-{(\pi_{I,J}^*f)^\bullet } \DD(N(I)\times J)^{\pi_{I,J}-\cart}_{p_J^*i_J^*S} \ar@<5pt>[rr]^-{n_J^*} & & \ar@<5pt>[ll]^-{\pi_{I,J}^*} \DD(J)_{i_J^*S}. }  \]
For the second apply (N5 left) to the pre-derivator (fiber) $\DD_{(J,i_J^*S)}: K \mapsto \DD(K \times J)_{\pr_2^*i_J^*S}$.

3. Essentially uniqueness of adjoints. 

4. Follows from Lemma~\ref{LEMMAMONAD} applied to the monad $(\pi^*_{I,J}f_J)^\bullet p_J^* n^*_J$ associated with the adjunction
of 2. The assumptions are true because this monad has obviously values in 
absolutely Cartesian objects and by (N5 left) the unit is an isomorphism on absolutely Cartesian objects. 
\end{proof}

There are corresponding dual axioms (with a corresponding dual version of the Lemma which we leave to the reader to state):
\begin{enumerate}
\item[(N4 right)] For any functor $\alpha: I \rightarrow J$ in $\Dia$ and for any object $j \in J$  the diagram
\[ \xymatrix{
N(j \times_{/J} I) \ar@{}[rd]|\Nearrow \ar[r] \ar[d] & N(I) \ar[d] \\
j  \ar[r] & J
} \]
is 2-Cartesian (i.e.\@ identifies the top left category with the corresponding comma category). 
 
\item[(N5 right)] For any pre-derivator $\DD$ satisfying (Der1) and (Der2) with domain $\Dia'$ and for all $I \in \Dia$ with initial object $i$ the functors
\[ \xymatrix{ \DD(N(I))^{\pi_I-\cart} \ar@<5pt>[rr]^-{n^*} & & \ar@<5pt>[ll]^-{p^*} \DD(\cdot) }  \]
form an adjunction, with $n^*$ right adjoint, where $n$ is some object with $\pi_I(n)=i$ and $p: N(I) \rightarrow \cdot$ is the projection.
Furthermore the unit of the adjunction is the natural isomorphism and the counit is an isomorphism on (absolutely) Cartesian objects. 
\end{enumerate}

 \begin{PROP}\label{PROPCONSTRN}
 A strict functor $N$ as in \ref{PARAXIOMS} exists in the following cases and satisfies axioms (N1--3), (N4--5 left) (resp.\@ (N4--5 right)):
 \begin{equation*}
\begin{array}{c|c}  
\Dia' & \Dia \\
\hline
\hline
\Inv & \Cat \\
\Invpos & \Cat^\circ \\ 
\hline
\Dir & \Cat \\
\Dirpos & \Cat^\circ 
\end{array} 
\end{equation*}
 \end{PROP}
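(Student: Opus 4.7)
The plan is to realize $N(I)$ in each of the four cases as a suitable variant of the \emph{subdivision} or \emph{simplex category} of $I$, equipped with an endpoint projection $\pi_I: N(I) \to I$. I will sketch the case $(\Dia',\Dia) = (\Inv, \Cat)$ in detail; the other three are obtained by passing to a poset quotient (for $\Invpos$, $\Dirpos$) and/or by dualizing.

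I will take $N(I)$ to be the category whose objects are the non-degenerate simplices of the nerve of $I$ --- that is, composable chains $\sigma = (i_0 \to i_1 \to \cdots \to i_n)$ of non-identity morphisms --- with morphisms corresponding to face inclusions, oriented so that a longer chain maps to each of its sub-chains. The length function $|\sigma| := n$ exhibits $N(I)$ as an inverse category. Functoriality in $I$ is by pushing chains forward and collapsing any identities thereby introduced. The projection $\pi_I$ records a distinguished endpoint (chosen so that $\pi_I$ is a covariant functor and is compatible with the direction of the comma construction appearing in (N4 left)); it assembles into the required natural transformation $\pi: N \Rightarrow \id$.

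Axioms (N1)--(N4) should follow by direct combinatorial arguments. (N2) is immediate from the definition. For (N3), $\pi_I$ is surjective on objects via length-zero chains $(i)$, and on morphisms via length-one chains $(f: i \to j)$; the fiber $\pi_I^{-1}(i)$ is connected because within it the length-zero chain $(i)$ is a common target of face morphisms. (N1) reduces to an explicit description of the comma category $N(I) \times_{/I} N(I)$ as pairs of chains together with a linking morphism in $I$, which is again inverse in the total-length function. (N4 left) follows from the bijection between chains in the comma category $I \times_{/J} j$ and chains in $I$ equipped with a morphism from the appropriate endpoint to $j$ in $J$.

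The technical heart --- and the main obstacle --- will be (N5 left). Given $I$ with final object $i$, I take $n := (i) \in N(I)$. The counit of the candidate adjunction $n^* \dashv p^*$ is forced to be the natural isomorphism, since $p \circ n = \id$. For the unit, the idea is that any chain $\sigma = (i_0 \to \cdots \to i_m)$ admits, via the unique morphism $i_m \to i$, a canonical enlargement $\widetilde{\sigma} := (i_0 \to \cdots \to i_m \to i)$ together with two face morphisms in $N(I)$: one $\widetilde{\sigma} \to \sigma$ lying over an identity under $\pi_I$, and one $\widetilde{\sigma} \to (i)$ lying over the non-trivial arrow $i_m \to i$. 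On a $\pi_I$-Cartesian object $X$ the first face becomes an isomorphism, so the zigzag produces, after inversion, a natural morphism $\sigma^* X \to n^* X$ which I will assemble into the unit; when $X$ is moreover \emph{absolutely} Cartesian, the second face is an isomorphism too and the unit becomes an isomorphism. The adjunction property itself is then verified by giving mutually inverse natural bijections $\Hom(p^* Y, X) \leftrightarrow \Hom(Y, n^* X)$ via the enlargement construction. The principal difficulty is that $\DD$ is only assumed to satisfy (Der1) and (Der2), with no Kan-extension machinery available; everything must be argued pointwise through explicit zigzags, and the most delicate bookkeeping is to distinguish carefully where $\pi_I$-Cartesianity suffices from where absolute Cartesianity is required. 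The right case is dual.
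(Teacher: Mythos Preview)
Your approach is the same in spirit as the paper's --- realize $N(I)$ as a simplex/subdivision category and contract it onto the chosen vertex via an ``append the final (resp.\ initial) object'' construction --- but there are two genuine problems.

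First, an orientation error. With morphisms in $N(I)$ going from longer chains to their sub-chains, the only way to make $\pi_I$ covariant is to take the \emph{first} vertex; but then (N4 left) fails. Indeed, an object of $N(I)\times_{/J}j$ is a non-degenerate chain $(i_0\to\cdots\to i_n)$ together with $\alpha(i_0)\to j$, whereas a non-degenerate chain in $I\times_{/J}j$ is determined by the chain in $I$ together with $\alpha(i_n)\to j$; these are not in bijection. In the paper the $(\Inv,\Cat)$ construction is $N(I)=\bigl(\int\mathcal N^\circ(I)\bigr)^{\op}$, so morphisms go from faces to the simplices containing them, and $\pi_I$ is the \emph{last} vertex; with that orientation (N4 left) holds. (Your description of the zigzag is also internally inconsistent: you say $\widetilde\sigma\to\sigma$ lies over an identity while $\widetilde\sigma\to(i)$ lies over $i_m\to i$; no single endpoint choice makes both true.)

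Second, your pointwise zigzag is not yet a proof of (N5). The paper packages the enlargement as an honest endofunctor $\xi:N(I)\to N(I)$ with natural transformations linking $\xi$ to $\id_{N(I)}$ and to $n\circ p$; applying the pre-derivator $\DD$ to these then yields the unit/counit as morphisms in $\DD(N(I))$ (not merely a family of morphisms in $\DD(\cdot)$), and the triangle identities become a direct computation. Your sketch produces only the components $\sigma^*X\to n^*X$ and does not address naturality or the triangle identities. Relatedly, the paper separates two regimes: for $(\Inv,\Cat)$ and $(\Dir,\Cat)$ it uses the \emph{full} semi-simplicial nerve (all chains, including those containing identities), and then one triangle identity reduces to showing that the two face maps $(\Delta_1,\id_i)\rightrightarrows(\Delta_0,i)$ act equally on Cartesian objects --- this is where the lemma $\pi_1\bigl((\Delta^\circ)^{\op}\bigr)=1$ enters. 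The non-degenerate nerve is reserved for $(\Invpos,\Cat^\circ)$ and $(\Dirpos,\Cat^\circ)$; there $\xi$ must be defined by the case distinction ``do nothing if the chain already ends (resp.\ begins) at $i$'', which you omit but which is essential for $\widetilde\sigma$ to remain non-degenerate.
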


Here $\Cat^\circ$ is the category of those small categories in which identities do not factor nontrivially. Observe that $\Cat^\circ$ is self-dual, and that $\Dir \subset \Cat^\circ$ and $\Inv \subset \Cat^\circ$.
 
\begin{proof}
The functors $N$ are the following: 
For the pair $(\Dir \subset \Cat)$ denote by $\mathcal{N}^\circ(I)$ the semi-simplicial nerve of $I$. 
By applying the Grothendieck construction to the semi-simplicial set $\mathcal{N}^\circ(I)$ we obtain a directed diagram which is an opfibration with discrete fibers
over $(\Delta^{\circ})^{\op}$:
\[ N(I):=\int \mathcal{N}^\circ(I) \rightarrow (\Delta^{\circ})^{\op}. \]
It comes equipped with a natural functor $\pi_I: N(I) \rightarrow I$ mapping $(\Delta_n, i_0 \rightarrow \dots \rightarrow i_n)$ to $i_0$. 

For the pair $(\Dirpos \subset \Cat^\circ)$ denote by $\mathcal{N}^\circ(I)'$ the subobject of the semi-simplicial nerve of $I$ given by simplices
$\Delta_n  \rightarrow I$ in which no non-identity morphism is mapped to an identity. $N$ and $\pi$ are defined similarly and it is clear that
$N(I)$ is a directed poset. 

For the pair $(\Inv \subset \Cat)$,
by taking the opposite of the functor $N$ constructed for the pair $(\Dir \subset \Cat)$, we get an inverse diagram with a fibration to $\Delta^{\circ}$:
\[ N(I):= (\int \mathcal{N}^\circ(I))^{\op} \rightarrow \Delta^{\circ}. \]
It comes equipped with a natural functor $\pi_I: N(I) \rightarrow I$ mapping $(\Delta_n, i_0 \rightarrow \dots \rightarrow i_n)$ to $i_n$. 

For the pair $(\Invpos \subset \Cat^\circ)$ we have the obvious fourth construction. 

We have to check the axioms in each case, but will concentrate on the pairs $(\Dir \subset  \Cat)$ (in the following called case A) and $(\Dirpos \subset  \Cat^\circ)$ (in the following called case B), the others being dual. 

(N1--3) and (N4 right) are obvious. 

(N5 right)
Let $I \in \Dia$ be a diagram with initial object.  
We let $n$ (in both cases) be the object $(\Delta_0, i)$ of $N(I)$. 
The unit of the adjunction is the natural isomorphism
\[ u:\id \Rightarrow n^* p^*    \]
given by the equation $p \circ n = \id$.  

Recall from \cite[Lemma 2.3.3]{Hor15} (cf.\@ also \cite[Proposition 6.6]{Cis03}) the definition of the functor
\[ \xi: N(I) \rightarrow N(I) \]
which in case A is defined by
\[ (\Delta_n, i_0 \rightarrow \cdots \rightarrow i_n) \mapsto (\Delta_{n+1}, i \rightarrow i_0 \rightarrow \cdots \rightarrow i_n),   \]
and in case B by
\[ (\Delta_n, i_0 \rightarrow \cdots \rightarrow i_n) \mapsto \begin{cases}
 (\Delta_{n+1},  i \rightarrow i_0 \rightarrow \cdots \rightarrow i_n) & i_0 \not= i,  \\ 
 (\Delta_{n},  i_0 \rightarrow \cdots \rightarrow i_n) & i_0 = i.  
 \end{cases}  \]
There are (in both cases) natural transformations
\begin{equation}\label{eqn} \xymatrix{  \id_{N(I)} & \ar@{=>}[l] \xi \ar@{=>}[r] & n \circ p.  }  \end{equation}
The counit of the adjuntion
\[ c: p^* n^* \Rightarrow \id \]
is given as follows: 
Applying $\DD$ to the sequence (\ref{eqn}) we get
\[ \xymatrix{  \id & \ar@{=>}[l] \xi^* \ar@{=>}[r] & (n \circ p)^*  }  \]
where the morphism to the right is obviously an isomorphism on $\pi_I$-Cartesian objects.
We let $c$ be the composition going from right to left. 

We will now check the counit/unit equations.

1. We have to show that the composition 
\begin{equation}\label{eqcu1} \xymatrix{ n^* \ar[r]^-{u n^*} & n^*p^*n^* \ar[r]^-{n^* c} & n^*  } \end{equation}
is the identity. Inserting the definitions, we get  that (\ref{eqcu1}) is $\DD$ applied to the following sequence of functors and natural transformations:
\[ \xymatrix{ n  & \ar[l]_-{e_0}  \xi n \ar[r]^-{e_1} & n p n  \ar@{=}[r] &  n  } \]
where $\xi n$ is the inclusion of $(\Delta_1, \id_i)$ in case A and is $n$ in case B. 
The morphisms $e_{0,1}$ are the (opposite of the) two inclusions $\Delta_0 \rightarrow \Delta_1$ in case A and the identity in case B. 
Hence in case B it is obvious that the composition (\ref{eqcu1}) is the identity while in case A 
it follows from Lemma~\ref{LEMMAPI1} (after applying $\DD$).

2. We have to show that the composition
\begin{equation}\label{eqcu2} \xymatrix{ p^* \ar[r]^-{p^* u} & p^*n^*p^* \ar[r]^-{c n^*} & p^*  } \end{equation}
is the identity. Inserting the definitions, we get that (\ref{eqcu2}) is $\DD$  applied to the following sequence of functors and natural transformations:
\[ \xymatrix{ p  & \ar@{=}[l]  p \xi \ar@{=}[r] & pnp  \ar@{=}[r] &  p  } \]
which consists only of identities. 
Hence the composition (\ref{eqcu2})  was the identity as well.
\end{proof}

\begin{LEMMA}[right]\label{LEMMAPI1}
Let $\DD$ be a pre-derivator with domain $\Dia'$, let $I$ be a diagram in $\Dia$ with initial object  $i$, and
let $\mathcal{E} \in \DD(\int \mathcal{N}^\circ (I))^{\pi_I-\cart}$. Then the two isomorphisms
\[ \xymatrix{
(\Delta_0, i)^*\mathcal{E} \ar@<5pt>@{<-}[rr]^-{\DD(e_0)} \ar@<-5pt>@{<-}[rr]_-{\DD(e_1)} && (\Delta_1, \id_i)^* \mathcal{E} 
} \]
in $\DD(\cdot)$ are equal. 
\end{LEMMA}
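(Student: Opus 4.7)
I would use a single $2$-simplex of $\int \mathcal{N}^\circ(I)$ whose edges and vertices all collapse onto $(\Delta_1, \id_i)$ and $(\Delta_0, i)$ respectively. Namely, consider the object $\rho := (\Delta_2, i \to i \to i)$, with both arrows equal to $\id_i$. Its three edge-faces all land on $(\Delta_1, \id_i)$, its three vertex-faces all land on $(\Delta_0, i)$, and every one of the six face-inclusion morphisms emanating from $\rho$ in $N(I)$ maps to $\id_i$ under $\pi_I$. By $\pi_I$-Cartesianness of $\mathcal{E}$ (which in the pre-derivator setting amounts to $\DD(\mu)$ being an isomorphism whenever $\pi_I(\mu) = \id$), all six of the induced maps in $\DD(\cdot)$ are isomorphisms.

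Writing $\alpha_0, \alpha_1, \alpha_2 \colon \rho^*\mathcal{E} \iso (\Delta_1, \id_i)^*\mathcal{E}$ for the three edge restrictions, each vertex $v_j$ of $\Delta_2$ sits on exactly two of the three edges, and this incidence --- via the semi-simplicial identities --- corresponds to a commutative diagram in $\DD(\cdot)$ relating two compositions of the form $\DD(e_?) \circ \alpha_k$. For the two outer vertices ($0$ and $2$) the two compositions share their endpoint factor --- both equal to $\DD(e_1)$ at vertex $0$ and both equal to $\DD(e_0)$ at vertex $2$ --- so after cancelling these isomorphisms one gets first $\alpha_1 = \alpha_2$ and then $\alpha_0 = \alpha_1$; hence all three $\alpha_k$ coincide. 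The incidence at the middle vertex then reads $\DD(e_1) \circ \alpha = \DD(e_0) \circ \alpha$ for the common value $\alpha$, and cancelling this isomorphism yields the desired $\DD(e_0) = \DD(e_1)$.

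The argument is purely combinatorial once the $2$-simplex $\rho$ has been isolated, and I do not expect a serious obstacle. The only care needed is bookkeeping --- checking which face map of $\Delta_2$ realizes which of the two factorizations $v_j = d_k d_l$ in $\Delta^\circ$, so that the two endpoint labels ($e_0$ versus $e_1$) at each of the three vertices are identified correctly.
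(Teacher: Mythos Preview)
Your argument is correct and takes a genuinely different, more elementary route than the paper. The paper's proof restricts $\mathcal{E}$ along the inclusion $\iota \colon (\Delta^\circ)^{\op} = \int \mathcal{N}^\circ(i) \hookrightarrow \int \mathcal{N}^\circ(I)$, observes that the resulting underlying diagram $(\Delta^\circ)^{\op} \to \DD(\cdot)$ inverts every arrow (by $\pi_I$-Cartesianness), and then invokes the global fact that $\pi_1\bigl((\Delta^\circ)^{\op}\bigr) = 1$ (indeed $[0]$ is terminal there, so the nerve is contractible), which forces all parallel arrows to have the same image. Your argument is, in effect, a direct and minimal verification of the one relation that matters: the single degenerate $2$-simplex $\rho$ together with the semi-simplicial identities already witnesses $e_0 \sim e_1$ in the groupoidification, and you read this off explicitly after cancelling the isomorphisms $\alpha_k$. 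Your approach is more self-contained and avoids citing the fundamental-group fact; the paper's approach is terser and makes the conceptual reason (simple connectedness of $(\Delta^\circ)^{\op}$) transparent.
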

\begin{proof}
The underlying diagram of $\iota^* \mathcal{E}$, where $\iota: (\Delta^{\circ})^{\op} = \int \mathcal{N}^\circ (i) \rightarrow \int \mathcal{N}^\circ (I)$ is the inclusion, is a functor
\[ (\Delta^{\circ})^{\op} \rightarrow \DD(\cdot) \]
which maps all morphisms to isomorphisms. Since $\pi_1((\Delta^{\circ})^{\op})=1$, necessarily all parallel morphisms are mapped to the same isomorphism. 
\end{proof}

 \section{$\pi_I$-Cartesian projectors}
 
 Let $\Dia' \subset \Dia$ be diagram categories and
 let $N$ be a functor as in \ref{PARAXIOMS}. We also use the notation of Lemma~\ref{BASICLEMMA}.
 
 \begin{PROP}[left]\label{PROPCARTPROJ}
Let $\DD \rightarrow \SSS$ be a left fibered derivator satisfying also (FDer0 right)\footnote{at least when {\em neglecting} the multi-aspect} with domain $\Dia'$.
For all $I \in \Dia$, $J \in \Dia'$ and $S \in \SSS(I \times J)$ there exists a left $\pi_{I,J}$-Cartesian projector (cf.\@ Definition~\ref{DEFPROJECTOR})
\[ \Box_!^{\pi_{I,J}}: \DD(N(I) \times J )_{\pi_{I,J}^*S} \rightarrow \DD(N(I) \times J)_{\pi_{I,J}^*S}^{\pi_{I,J}-\cart}.  \] 
\end{PROP}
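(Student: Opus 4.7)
The plan is to reduce, via comma categories, to the situation of Lemma~\ref{BASICLEMMA} where a local projector is available, and then to glue over $I$ using the double comma $N(I) \times_{/I} N(I)$, which lies in $\Dia'$ by (N1). For each $i \in I$, the slice $I/i = I \times_{/I} i$ has $\id_i$ as a final object, and (N4 left), applied with $\alpha = \id_I$, identifies $N(I/i)$ with $N(I) \times_{/I} i$ formed along $\pi_I$. Hence Lemma~\ref{BASICLEMMA} applies to $I/i$ in place of $I$: for each $i$, the formula $(\pi^*f)^\bullet p^* n^*$ from Lemma~\ref{BASICLEMMA}(4), with all data formed relative to the final object $\id_i$ of $I/i$, is in fact defined on all of $\DD(N(I/i) \times J)$ (not only on its $\pi$-Cartesian subcategory) and produces a $p$-Cartesian object there.

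Given $E \in \DD(N(I) \times J)_{\pi_{I,J}^*S}$, I would then define $\Box_!^{\pi_{I,J}} E$ intrinsically as a twist of $p_{1,!} p_2^* E$, where $p_1, p_2: N(I) \times_{/I} N(I) \to N(I)$ are the two projections of the double comma, and the twist is by the structure morphism analogous to the $f$ of Lemma~\ref{BASICLEMMA}, coming from the canonical natural transformation $\id \Rightarrow i_J p_J$ on each slice. By (N4 left), the fiber of $p_1$ over $x \in N(I)$ is precisely $N(I/\pi_I(x))$, so evaluation at $(x, \id_{\pi_I(x)})$ recovers the local formula of the previous paragraph. In particular, the output is fiberwise $p$-Cartesian on each slice $N(I/i)$, which globally amounts exactly to $\pi_{I,J}$-Cartesianness of the result on $N(I) \times J$.

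For the universal property, I would use the fact that, for any $\pi_{I,J}$-Cartesian target $F$, a morphism $E \to F$ is determined by its restrictions to each slice $N(I/i) \times J$; by Lemma~\ref{BASICLEMMA}(3) combined with (N5 left), each such restriction factors uniquely through the local projector applied to the restriction of $E$. Assembling these factorizations via the double-comma description gives the desired bijection of $\Hom$-sets with $\Box_!^{\pi_{I,J}} E \to F$, establishing left adjointness to the inclusion.

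The main obstacle will be this gluing step: verifying that $p_{1,!} p_2^*$ on the double comma, after the appropriate twist, is genuinely $\pi_{I,J}$-Cartesian and realizes the claimed universal property globally, rather than merely some pointwise analog. This requires (FDer0 right) in order to $\SSS$-Cartesianly lift the natural transformations $\id \Rightarrow i_J p_J$ as $i$ varies, together with base-change isomorphisms supplied by (N4 left) applied to the double comma; comparison with the local adjunction of Lemma~\ref{BASICLEMMA}(2) via (Der2) should then seal the argument.
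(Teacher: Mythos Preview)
Your construction is the paper's: the projector is built from the double comma $(N(I)\times_{/I}N(I))\times J$ by pulling back along one projection, twisting by the morphism $f$ induced from the comma's canonical natural transformation, and pushing forward along the other (axiom (N1) is what makes this live in $\Dia'$). One small correction: with the convention of (N4 left), the \emph{second} projection $p_2$ is the opfibration, and its strict fiber over $x$ is $N(I)\times_{/I}\pi_I(x)=N(I/\pi_I(x))$; the first projection $p_1$ is a fibration, and its fiber is the under-slice $\pi_I(x)\times_{/I}N(I)$, which is not what (N4 left) describes. So your formula should read $p_{2,!}\,(\text{twist})\,p_1^*$ rather than $p_{1,!}\,(\text{twist})\,p_2^*$.

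Where the real divergence lies is in the verification of adjointness. You propose to check the universal property by restricting to each slice $N(I/i)\times J$, invoking Lemma~\ref{BASICLEMMA} locally, and then ``assembling'' the factorizations over $I$; you rightly flag this assembly as the main obstacle, but you do not resolve it, and it is not clear how to reconstruct a global morphism into a $\pi_{I,J}$-Cartesian target from slice-wise data in the way you suggest. The paper bypasses this entirely. It recognizes the double-comma endofunctor as $\Psi(T)$ for the monad $T=[\pi_{I,J}^{(S)}]\circ[\pi_{I,J}^{(S)}]'$ in the $2$-category $(\Dia')^{\cor}(\SSS)$ of correspondences. Because $T$ arises from an adjunction there, the monad axioms (unit, multiplication, coherences) hold automatically---no gluing is required. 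One then checks only two pointwise facts: $\Psi(T)$ lands in $\pi_{I,J}$-Cartesian objects (its value at $(n,j)$ visibly depends only on $(\pi_I(n),j)$), and the unit $\id\Rightarrow\Psi(T)$ is an isomorphism on $\pi_{I,J}$-Cartesian objects (this is exactly Lemma~\ref{BASICLEMMA},~3). Lemma~\ref{LEMMAMONAD} on idempotent monads then delivers the left adjoint at once.

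So the genuine gap in your argument is the gluing step you yourself identify; the missing idea is to trade it for the monad formalism of $(\Dia')^{\cor}(\SSS)$ together with Lemma~\ref{LEMMAMONAD}, which converts a global coherence problem into two local checks you already know how to do.
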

The proposition has an obvious right variant which we leave to the reader to formulate. 
\begin{proof}
Recall from \cite[6.8]{Hor15b} that a left fibered derivator with domain $\Dia'$ gives rise to a pseudo-functor of 2-categories (the multi-aspect is not needed here):
\begin{eqnarray*} \Psi: (\Dia')^{\cor}(\SSS) &\rightarrow& \mathcal{CAT} \\
 (I, S) &\mapsto& \DD(I)_S.
 \end{eqnarray*}

For each triple $(I, J, S)$ as in the statement, we define the following monad $T$ in $(\Dia')^{\cor}(\SSS)$. 
It has the properties that $\Psi(T)$ has values in $\DD(N(I) \times J)^{\pi_{I,J}-\cart}_{\pi_{I,J}^*S}$, and that the unit
$\id \Rightarrow \Psi(T)$ is an isomorphism on $\DD(N(I) \times J)^{\pi_{I,J}-\cart}_{\pi_{I,J}^*S}$. By Lemma~\ref{LEMMAMONAD} it follows that $\Psi(T)$ is a left $\pi_{I,J}$-Cartesian projector.

We have the 1-morphism $[\pi_{I,J}^{(S)}]$ in $\Dia^{\cor}(\SSS)$ and its  left adjoint $[\pi_{I,J}^{(S)}]'$, cf.\@ \cite[6.1--3 and Lemma~6.7]{Hor15b}. This adjunction defines a monad 
$T := [\pi_{I,J}^{(S)}] \circ [\pi_{I,J}^{(S)}]'$ on $(N(I) \times J, \pi_{I,J}^*S)$. Actually $T$ lies in $(\Dia')^{\cor}(\SSS)$ because of axiom (N1). 
Let us explicitly write down (a correspondence isomorphic to) $T$ as well as the unit: 
\[ \xymatrix{  & (N(I)\times_{/I}N(I)) \times J \ar[ld]_{(\pr_1,\id_J)} \ar[rd]^{(\pr_2, \id_J)} \\
N(I) \times J & & N(I) \times J  \\ 
& N(I) \times J \ar[uu]^{(\Delta_{12}, \id_J)} \ar[lu] \ar[ru] } \]
Here the topmost correspondence is equipped with the morphism $f: \pr_1^*\pi_{I,J}^*S \Rightarrow \pr_2^*\pi_{I,J}^*S$ induced by the natural transformation associated with the comma category.

Point-wise for $(n,j) \in N(I) \times J$ and $\mathcal{E} \in \DD(N(I) \times J)_{\pr_J^*S}$ we thus have for $i:=\pi_I(n)$:
\[ (n,j)^* (\Psi(T) \mathcal{E}) = p_{N(i \times_{/I} I),! } ((n,\id_{N(I)},j)^*f)_\bullet (\pr_1, j)^* \mathcal{E}.    \]
Obviously the right hand side depends only on $(i, j)$. Therefore the object $\Psi(T) \mathcal{E}$ is $\pi_{I,J}$-Cartesian.  
Note that by (N4 left) we have $i \times_{/I} N(I) = N(i \times_{/I} I)$.

The unit is given point-wise by the natural morphism
\[ \xymatrix{(n, n, j)^* (N(\pr_1), j)^* \mathcal{E}  \ar[r] & p_{N(i\times_{/I}I),! } ((n,\id_{N(I)},j)^*f)_\bullet (N(\pr_1), j)^* \mathcal{E}.  } \]
If $\mathcal{E}$ is $\pi_{I,J}$-Cartesian $(N(\pr_1), j)^* \mathcal{E}$ is $\pi_{I,J}$-Cartesian as well, and this map is an isomorphism by Lemma~\ref{BASICLEMMA}, 3.
%
%
%
%
\end{proof}



The following Lemma is well-known but due to lack of reference in this precise formulation we include its proof.

\begin{LEMMA}[left]\label{LEMMAMONAD}
Let $(\mathcal{C}, T, u, \mu)$ with $T: \mathcal{C} \rightarrow \mathcal{C}, u: \id \Rightarrow T$, and $\mu: T^2 \Rightarrow T$ be a monad in $\mathcal{CAT}$ and let $\mathcal{D} \subset \mathcal{C}$ be a full subcategory such that
\begin{enumerate}
\item $T$ takes values in $\mathcal{D}$,
\item the unit $u: \id \Rightarrow T$ is an isomorphism on objects of $\mathcal{D}$. 
\end{enumerate}
Then $T$, considered as functor $\mathcal{C} \rightarrow \mathcal{D}$, is left adjoint to the inclusion $\mathcal{D} \hookrightarrow \mathcal{C}$.
\end{LEMMA}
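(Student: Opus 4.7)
The plan is to construct the unit and counit of the desired adjunction explicitly from the monad data and then verify the triangle identities, which reduces entirely to the two hypotheses and the monad laws.

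First I would set $\eta := u : \id_{\mathcal{C}} \Rightarrow T$ as the unit of the adjunction; this is well-typed since $T$ factors through $\mathcal{D}$ by hypothesis~(1), so $u$ may be regarded as a natural transformation $\id_{\mathcal{C}} \Rightarrow \iota T$. For the counit $\epsilon : T\iota \Rightarrow \id_{\mathcal{D}}$, I would define, for $D \in \mathcal{D}$, the component $\epsilon_D := u_D^{-1} : T(D) \to D$, which is defined by hypothesis~(2). Naturality of $\epsilon$ is then forced by naturality of $u$: for $f : D \to D'$ in $\mathcal{D}$, inverting $u_{D'} \circ f = T(f) \circ u_D$ yields $f \circ \epsilon_D = \epsilon_{D'} \circ T(f)$.

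One triangle identity is immediate: for $D \in \mathcal{D}$, $\epsilon_D \circ \eta_D = u_D^{-1} \circ u_D = \id_D$. The other triangle identity is the only point that genuinely uses the monad structure: for $X \in \mathcal{C}$ we must check $\epsilon_{TX} \circ T(\eta_X) = \id_{TX}$, i.e., $u_{TX}^{-1} \circ T(u_X) = \id_{TX}$, or equivalently $T(u_X) = u_{TX}$. Here both standard monad unit axioms enter: $\mu_X \circ T(u_X) = \id_{TX}$ and $\mu_X \circ u_{TX} = \id_{TX}$. Since $TX \in \mathcal{D}$, $u_{TX}$ is invertible, so the second equation forces $\mu_X = u_{TX}^{-1}$, hence $\mu_X$ is an isomorphism; the first equation then gives $T(u_X) = \mu_X^{-1} = u_{TX}$, as required.

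The expected main obstacle, or rather the only subtle step, is exactly this last reduction: one has to notice that hypothesis~(2) upgrades the multiplication $\mu_X$ from a mere retraction to an isomorphism (with $u_{TX}$ as its inverse), and this in turn is what identifies the two a priori distinct maps $T(u_X)$ and $u_{TX}$. Everything else, including naturality and the remaining triangle identity, is essentially formal.
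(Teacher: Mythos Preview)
Your proof is correct and follows essentially the same approach as the paper: both define the counit as the inverse of $u$ on $\mathcal{D}$ and reduce the nontrivial triangle identity to the equality $Tu = uT$, which is obtained from the two monad unit laws after observing that $u_{TX}$ is invertible (so $\mu_X$ is an isomorphism). The only difference is cosmetic---you work component-wise while the paper uses whiskering notation.
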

There is a corresponding right version in which a comonad gives rise to a right adjoint to the inclusion. 
\begin{proof}
Consider $T$ as functor $\mathcal{C} \rightarrow \mathcal{D}$, which is possible by assumption 1.\@, and denote $\iota$ the inclusion $\mathcal{D} \hookrightarrow \mathcal{C}$. 
We define the unit  $\id \rightarrow \iota T$ of the adjunction to be the unit $u$ of the monad. 
The counit $T \iota \rightarrow \id$ is its inverse which exists by assumption 2. 

To show that this defines indeed an adjunction, we have to show the equation $uT=Tu$ as natural transformations $T \Rightarrow T^2$ (which is to say that the monad is an {\em idempotent} monad). 

By the definition of monad, we have the diagram
\[ \xymatrix{
T \ar@<-4pt>[r]_-{uT} \ar@<4pt>[r]^-{Tu} & T^2 \ar[r]^{\mu} & T 
} \]
in which both compositions are the identity. Hence to show that  $uT=Tu$ we have to show that one of them is an isomorphism, for 
then $\mu$ is an isomorphism as well, and hence after canceling $\mu$ we have $uT=Tu$. However $uT$ is an isomorphism by the assumptions.
\end{proof}

 \commentempty{
Consider the correspondence $\pi_{I}$  (together with a morphism from the identity $\nu: \id_{N(I)} \Rightarrow \pi_{I}$)
\[ \xymatrix{  & N(I)\times_{/I}N(I) \ar[ld] \ar[rd] \\
N(I) & & N(I)  \\ 
& N(I) \ar[uu]^{\Delta_{12}} \ar[lu] \ar[ru] } \]

Claim $\pi_{I}^\bullet$ together with $\nu: \id \Rightarrow \pi_{I}^\bullet$ is a right $l$-(co)Cartesian projector. 

We first show that $\pi_{I}^\bullet$ has values in $l$-(co)Cartesian objects: Observe that $\pr_1$ is a fibration, hence
the limit is computed fiber-wise. The fiber over $n$ of something of the form $\pr_2^*$, however, does only depend on $l(n)$.

Consider 
\[ \xymatrix{  & N(I) \times_{/I} N(I)  \ar[ld] \ar[rd] \\
N(I) & N(I) \times_{/I} N(I) \times_{/I} N(I) \ar[u]^{\pr_{13}} \ar[l] \ar[r] & N(I)  \\ 
& N(I)\times_{/I} N(I)  \ar@<3pt>[u]^{\Delta_{12}} \ar@<-3pt>[u]_{\Delta_{23}} \ar[lu] \ar[ru] } \]

We have to show that the two natural transformations $\pi_{I}^\bullet \Rightarrow (\pi_{I}^\bullet)^2 $ are the same. Observe that they become equal to the identity of $\pi_{I}^\bullet$ after composition with the topmost. 
It suffices therefore to see that any of the two natural transformations $\pi_{I}^\bullet \Rightarrow (\pi_{I}^\bullet)^2$ is an isomorphism. 
Since $\pi_{I}^\bullet$ has values in $l$-(co)Cartesian objects. We have to show that the morphism $\id \Rightarrow \pi_{I}^\bullet$
is an isomorphism on $l$-(co)Cartesian objects:

Point-wise at $n$ $\pi_{I}^\bullet \mathcal{E}$ is given as the limit of $\pr_2^*\mathcal{E}$ over the diagram $l(n) \times_I N(I) = N(j(n) \times_I I)$.
If the element $\mathcal{E}$ is $l$-(co)Cartesian, $\pr_2^*\mathcal{E}$ is $l$-(co)Cartesian and this is the evaluation at $\Delta_0, \id_{l(n)}$. 
More precisely the morphism going to evaluation at $\Delta_0, \id_{l(n)}$ is an isomorphism. Now there is an isomorphism
The identity at $n$ can be seen as the integral over $n \times_{N(I)} N(I)$. Which is the evaluation at $n$ of $N(I)$. 

Let $F: I \rightarrow \Dia$ be a strict (!) functor and consider the chain of functors
\[ \xymatrix{  N( \int F)  \ar[r]^{\pi_1} & \int N(F) \ar[r]^{\pi_2} & \int F  } \]
Describe $\pi_1$ in detail...
Let $S \in \SSS(\int F)$ be an object. 
}

\begin{PROP}[left]\label{PROPCARTPROJ2}
Let $\DD \rightarrow \SSS$ be a left fibered derivator with domain $\Dia'$ satisfying also (FDer0 right)\footnote{at least when {\em neglecting} the multi-aspect}.
For each $I \in \Dia$, $J \in \Dia'$, and $S \in \SSS(I \times J)$ the functor 
\[ (N(\pr_1), \pr_2 \circ \pi_{I,J})^*:   \DD(N(I) \times J)_{\pi_{I,J}^*S}^{\pi_{I,J}-\cart} \rightarrow \DD(N(I \times J))_{\pi_{I \times J}^*S}^{\pi_{I \times J}-\cart}      \]
is an equivalence of categories. Its inverse is given by $(N(\pr_1), \pr_2 \circ \pi_{I,J})_!$ followed by the left $\pi_{I,J}$-Cartesian projector of Proposition~\ref{PROPCARTPROJ}. 
\end{PROP}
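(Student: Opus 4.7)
The plan is to set $F := (N(\pr_1), \pr_2 \circ \pi_{I \times J}): N(I \times J) \to N(I) \times J$, so that $F^*$ is the functor in the statement and $\pi_{I,J} \circ F = \pi_{I \times J}$ by naturality of $\pi$. Since $J \in \Dia'$ and $N$ lands in $\Dia'$, axiom (N2) gives that source and target of $F$ both lie in $\Dia'$, so $F_!$ exists by (FDer0 left). The overall strategy is to exhibit $\Box_!^{\pi_{I,J}} \circ F_!$ as a left adjoint to $F^*$ between the two Cartesian subcategories, and then check that both the unit and counit of this adjunction are isomorphisms on Cartesian objects.

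First I would verify that $F^*$ does restrict to a functor on the Cartesian subcategories: a morphism $\mu$ in $N(I \times J)$ with $\pi_{I \times J}(\mu) = \id$ is sent by $F$ to $(N(\pr_1)(\mu), \id_j)$, and $\pi_I(N(\pr_1)(\mu)) = \pr_1(\pi_{I \times J}(\mu)) = \id$, so $F(\mu)$ is one of the morphisms entering the definition of $\pi_{I,J}$-Cartesianness and the transfer is automatic. Composing the adjunctions $F_! \dashv F^*$ with $\Box_!^{\pi_{I,J}} \dashv \iota$ from Proposition~\ref{PROPCARTPROJ} then yields the claimed adjunction between the Cartesian subcategories.

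To show the unit and counit are isomorphisms I would work pointwise, using the explicit monadic formula from the proof of Proposition~\ref{PROPCARTPROJ}: for $(n, j) \in N(I) \times J$ with $i := \pi_I(n)$,
\[
(n, j)^* \Box_!^{\pi_{I,J}} \mathcal{E} \;\cong\; p_{N(i \times_{/I} I),\,!}\, \bigl((n, \id, j)^* f\bigr)_\bullet\, (\pr_1, j)^* \mathcal{E},
\]
combined with the standard pointwise formula for $F_!$ as a left Kan extension over the comma category $F/(n,j)$. The geometric heart is the identification of $F/(n,j)$ with $N(i \times_{/I} I) \times \{j\}$, which is exactly axiom (N4 left) applied to $\pr_1: I \times J \to I$ at $i \in I$ (the $J$-coordinate factoring through $\pr_2 = j$). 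Under this identification the correspondence datum $f$ appearing in the monadic formula matches the 2-morphism coming out of the comma-square, so Lemma~\ref{BASICLEMMA}(3) shows that for a $\pi_{I,J}$-Cartesian $\mathcal{X}$ the value $(n,j)^* \Box_!^{\pi_{I,J}} F_! F^* \mathcal{X}$ collapses onto $(n,j)^* \mathcal{X}$; a symmetric analysis, using $\pr_2$-Cartesianness as in part~2 of Lemma~\ref{BASICLEMMA}, handles the unit on a $\pi_{I \times J}$-Cartesian $\mathcal{Y}$.

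The main obstacle is precisely this last matching step: checking that the correspondence morphism $f$ from the monadic description of $\Box_!^{\pi_{I,J}}$ really corresponds, under the (N4 left) identification of comma categories, to the natural transformation one gets from the comma-square in $N(I \times J)$. Once this bookkeeping in $\Dia^{\cor}(\SSS)$ is in place, everything else reduces to invoking Lemma~\ref{BASICLEMMA}(3) and essential uniqueness of adjoints.
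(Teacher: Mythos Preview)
Your framework is right: set up the adjunction $\Box_!^{\pi_{I,J}}\,F_! \dashv F^*$ between the Cartesian subcategories and check that unit and counit are invertible. But your pointwise verification has a genuine gap. The identification $F/(n,j)\cong N(i\times_{/I}I)\times\{j\}$ is not what axiom (N4~left) delivers. That axiom compares $N$ of a comma category with a comma category taken \emph{over the base diagram}; applied to $\pr_1\colon I\times J\to I$ at $i$ it gives
\[
N\bigl((I\times_{/I} i)\times J\bigr)\;\cong\;N(I\times J)\times_{/I} i,
\]
which still carries the full $J$-direction and is a slice over $I$, not over $N(I)\times J$. The comma $F/(n,j)$ involves a slice over $N(I)$ in the first coordinate and over $J$ in the second, and (N4~left) addresses neither. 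Nor does simply composing the monadic formula for $\Box_!^{\pi_{I,J}}$ with the Kan formula for $F_!$ collapse to a single colimit over anything identified by (N4~left). So the ``bookkeeping'' you flag as the main obstacle is not bookkeeping at all---it is the missing content of the argument, and the claimed identification on which you would base it is incorrect.

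The paper bypasses every pointwise computation by a purely formal argument in the $2$-category $(\Dia')^{\cor}(\SSS)$. Writing $\pi_1:=F$ and $\pi_2:=\pi_{I,J}$, the factorization $\pi_{I\times J}=\pi_2\circ\pi_1$ forces the monads encoding the two Cartesian projectors to satisfy $T_{I\times J}=R_1\circ T_{I,J}\circ L_1$; in particular $\Box_!^{\pi_{I\times J}}\cong\pi_1^*\circ\Box_!^{\pi_{I,J}}\circ\pi_{1,!}$ as endofunctors. One short triangle-identity diagram among $R_1,L_1,R_2,L_2$ then shows both unit and counit are isomorphisms on Cartesian objects, using only that $T_{I,J}$ and $T_{I\times J}$ are idempotent there (Proposition~\ref{PROPCARTPROJ}). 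Neither (N4~left) nor Lemma~\ref{BASICLEMMA} appears in this step; axiom (N1) enters only to ensure the relevant composites lie in $(\Dia')^{\cor}(\SSS)$.
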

%
\begin{proof}
Set $\pi_1:=(N(\pr_1),\pi_J N(\pr_2))$ and $\pi_2:=\pi_{I,J}$.
Consider the composition: 
\[ \xymatrix{  N( I \times J) \ar@/_20pt/[rrrrr]_{\pi_{I\times J}} \ar[rrr]^-{\pi_1} &&& N(I) \times J \ar[rr]^-{\pi_2} && I \times J  } \]

With the following notation
\[
\begin{array}{ll}
L_1 := [\pi_1^{(\pi_2^*S)}]' & R_1 := [\pi_1^{(\pi_2^*S)}] \\
L_2 := [\pi_2^{(S)}]' & R_2 := [\pi_2^{(S)}] 
\end{array}
\]
the two monads in $(\Dia')^{\cor}(\SSS)$ associated with $\pi_{I \times J}$ and $\pi_{I,J}$ are respectively: 
\begin{eqnarray*}
T_{I,J} &:=& R_2 \circ L_2,  \\
T_{I \times J} &:=& R_1 \circ R_2 \circ L_2 \circ L_1. 
\end{eqnarray*}

Consider the following diagram in which the objects are 1-morphisms in $(\Dia')^{\cor}(\SSS)$
and in which the 2-morphisms are given by the obvious units and counits:
\[ \xymatrix{
& R_1 \circ R_2 \circ L_2 \circ L_1 \circ R_1 \ar[r]^-{\numcirc{2}}  & R_1 \circ R_2 \circ L_2 \\
R_1 \ar@/_20pt/[rr]_{\id_{R_1}} \ar[ru]^(.4){\numcirc{1}} \ar[r] & R_1 \circ L_1 \circ R_1 \ar[u] \ar[r] & R_1 \ar[u]_{\numcirc{3}}
} \]
 
The composition of the second row is the identity. Note that this diagram lies actually in $(\Dia')^{\cor}(\SSS)$ although $L_2$ and $R_2$ do only lie in $\Dia^{\cor}(\SSS)$.

Hence after applying the functor $\Psi: (\Dia')^{\cor}(\SSS) \rightarrow \mathcal{CAT}$ and evaluating everything at a $\pi_{I \times J}$-Cartesian object
we obtain a diagram
in which $\numcirc{1}$ is mapped to an isomorphism because $T_{I \times J}=L_1 \circ L_2 \circ R_2 \circ R_1$ is mapped to a left $\pi_{I \times J}$-Cartesian projector. Also $\numcirc{3}$ is mapped to an isomorphism because $T_{I,J} = R_2 \circ L_2$ is mapped to a left $\pi_{I,J}$-Cartesian projector. Hence $\numcirc{2}$ is mapped to an isomorphism. However the image of $\numcirc{2}$ is $\Psi(R_1)=\pi_1^*$ applied to the unit
\[ \Psi(T_{I,J}) \pi_{1,!}\pi_1^* \leftarrow \id \]
That is hence an isomorphism.

The morphism 
\[ L_1 \circ L_2 \circ R_2 \circ R_1 \rightarrow \id \]
is mapped to an isomorphism on $\pi_{I \times J}$-Cartesian objects, hence the counit
\[ \pi_1^* \Psi(T_{I,J}) \pi_{1,!} \rightarrow \id \]
is an isomorphism on $\pi_{I \times J}$-Cartesian objects.
Therefore $\pi_1^*$ and $\Psi(T_{I,J}) \pi_{1,!}$ constitute an equivalence as claimed. 
\end{proof}

\section{Enlargement}

\begin{SATZ}\label{MAINTHEOREM}
Let $\Dia' \subset \Dia$ be two diagram categories.
Let $\DD \rightarrow \SSS$ be a left (resp.\@ right) fibered multiderivator with domain $\Dia'$ satisfying also (FDer0 right)\footnote{at least when {\em neglecting} the multi-aspect} (resp.\@ (FDer0 left)) such that $\SSS$ is defined on all of $\Dia$. 
Let $N: \Dia \rightarrow \Dia'$ be a functor as in \ref{PARAXIOMS} satisfying axioms (N1--3) and (N4--5 left) (resp.\@ (N4--5 right)).
Then 
\[ \EE(I)_S := \DD(N(I))_{\pi_I^*S}^{\pi_I-\cart} \]
defines a left (resp.\@ right) fibered multiderivator satisfying also (FDer0 right)\footnote{{\em neglecting} the multi-aspect} (resp.\@ (FDer0 left)) with domain $\Dia$. 
The restriction of $\EE$ to $\Dia'$ is canonically equivalent to $\DD$. Any other such enlargement of $\DD$ to $\Dia$ is equivalent to $\EE$. 

If $\Posf \subset \Dia'$ and {\em the fibers} of $\DD$ are in addition right (resp.\@ left) derivators with domain $\Posf$, so are the
fibers of $\EE$. 
\end{SATZ}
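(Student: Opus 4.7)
The plan is to verify the axioms of a left fibered multiderivator for $\EE$ by reducing each one to its counterpart for $\DD$ on $\Dia'$, using Propositions~\ref{PROPCARTPROJ} and \ref{PROPCARTPROJ2} as the technical heart. First, $\EE$ is organized as a pre-2-multiderivator on $\Dia$: for $\alpha: I \rightarrow J$ in $\Dia$, the restriction $\alpha^{*,\EE}$ is the restriction of $N(\alpha)^*$ to Cartesian objects. This is well-defined because $\pi_J \circ N(\alpha) = \alpha \circ \pi_I$, so any morphism in $N(I)$ mapping to an identity under $\pi_I$ is sent by $N(\alpha)$ to a morphism mapping to an identity under $\pi_J$. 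The axioms (Der1), (Der2) for $\EE$ follow from those of $\DD$ together with (N2) and (N3). For the multi-aspect, a multimorphism $\alpha$ in $\SSS(J)$ over a multimorphism in $\Dia$ is handled analogously by transport along $N$ and $\pi$.

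For the equivalence $\EE|_{\Dia'} \simeq \DD$, I would observe that for $I \in \Dia'$ the functor $\pi_I^*: \DD(I) \rightarrow \DD(N(I))$ lands in $\EE(I)$ by connectedness of the fibers of $\pi_I$, and show it is an equivalence by constructing the inverse as $\pi_{I,!}$ followed by the left $\pi_I$-Cartesian projector. The verification reduces, via (Der2) applied pointwise on $I$ and (N4 left), to (N5 left) on the slices $N(i \times_{/I} I)$, exactly as in the proof of Proposition~\ref{PROPCARTPROJ2}. For the homotopy left Kan extensions (Der3 left), given $\alpha: I \rightarrow J$ in $\Dia$ I would define $\alpha_!^{\EE} := \Box_!^{\pi_J} \circ N(\alpha)_!$, composing the adjunction $N(\alpha)_! \dashv N(\alpha)^*$ in $\DD$ (legitimate since $N(\alpha)$ is in $\Dia'$) with the adjunction from Proposition~\ref{PROPCARTPROJ}. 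The fibered aspects (FDer0), (FDer3), (FDer5) and the multi-variable pushforwards are defined in the same pattern, using the fibered Cartesian projector of Proposition~\ref{PROPCARTPROJ} in the $\SSS$-relative form.

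The most delicate step will be (Der4 left) and its fibered version (FDer4 left). Given $j \in J$ one must check that the Beck--Chevalley morphism $j^{*,\EE} \alpha_!^{\EE} \rightarrow p_!^{\EE} (\alpha')^{*,\EE}$ attached to the comma square $\alpha': I \times_{/J} j \rightarrow I$ is an isomorphism. My plan is to transport the comma square across $N$ via (N4 left), unfold the definitions of $\alpha_!^\EE$, $p_!^\EE$ through Proposition~\ref{PROPCARTPROJ}, and then apply Proposition~\ref{PROPCARTPROJ2} to rewrite the $\pi_{I,j}$-Cartesian category $\DD(N(I) \times j)^{\pi_{I,j}-\cart}$ as $\DD(N(I \times_{/J} j))^{\pi-\cart}$; the resulting isomorphism is then the image under $\Box_!^{\pi_j}$ of the usual base-change isomorphism of $\DD$ on $\Dia'$. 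Uniqueness is then formal: any enlargement $\EE'$ with $\EE'|_{\Dia'} \simeq \DD$ satisfies $\EE'(I) \simeq \EE'(N(I))^{\pi_I-\cart} \simeq \DD(N(I))^{\pi_I-\cart} = \EE(I)$, the first equivalence being the analogue of step two applied to $\EE'$ itself (which is possible because $N(I) \in \Dia'$). The fiber statement about $\Posf$-derivators is obtained by observing that the whole construction is compatible with passing to fibers $\DD_S$, so the axioms for the fibers of $\EE$ reduce to the assumed axioms for the fibers of $\DD$ via the same machinery.
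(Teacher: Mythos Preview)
Your overall architecture matches the paper's: set $\alpha_!^{\EE} = \Box_!^{\pi_J} N(\alpha)_!$, verify (FDer4 left) by transporting the comma square through (N4 left) and the explicit formula for $\Box_!^{\pi_J}$, and obtain the restriction equivalence and uniqueness from Proposition~\ref{PROPCARTPROJ2} (applied with $I=\cdot$). The sentence ``(FDer0), (FDer3), (FDer5) \dots\ are defined in the same pattern'' hides two points that are \emph{not} routine and that the paper treats separately.

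First, (FDer0 right). For $n$-ary $f$ with $n\ge 2$ the pullback $(\pi_I^*f)^{\bullet,i}$ in $\DD(N(I))$ need not preserve $\pi_I$-Cartesian objects, so Cartesian multimorphisms in $\EE(I)$ are obtained only after composing with the \emph{right} $\pi_I$-Cartesian projector. In the left case of the theorem that projector is not available --- this is precisely why the statement says ``neglecting the multi-aspect'', and the paper instead derives the $1$-ary part of (FDer0 right) as the adjoint of (FDer5 left). In the right case of the theorem the second half of (FDer0 right), that $\alpha^*$ for an opfibration preserves Cartesian multimorphisms, is still not formal: one must show $N(\alpha)^*$ commutes with $\Box_*^{\pi}$, and the paper reduces this via a separate lemma to opfibrations of the form $i\times_{/I} I \to I$, where (N4 right) ensures $N(\alpha)$ is again an opfibration and a direct base-change computation applies.

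Second, (FDer5 left) does not follow from the projector adjunction alone. The paper first reduces (by a lemma) to $p:I\to\cdot$, then uses that on $\DD(N(\cdot))$ the projector is $\pi^*\pi_!$ (Lemma~\ref{BASICLEMMA}, 3--4) and that the non-varying arguments, being absolutely Cartesian, lie in the essential image of $\pi^*$; only then does (FDer5 left) for $\DD$ conclude. Finally, your fiber argument is too loose: the actual point is that for $\alpha\in\Posf$ both $(\id_{N(K)}\times\alpha)^*$ and $(\id_{N(K)}\times\alpha)_*$ preserve the $\pi_{K,-}$-Cartesian subcategories (the latter because Kan's formula is pointwise over $N(K)$), and Proposition~\ref{PROPCARTPROJ2} then identifies those subcategories with the fibers of $\EE$.
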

 
 \begin{proof}
 We begin by explaining the precise construction of $\EE \rightarrow \SSS$.
 The category $\EE(I)$ as a bifibration over $\SSS(I)$ is defined as the pull-back (cf.\@ \cite[2.23]{Hor15b})
 \[ \xymatrix{
\EE(I) \ar[r] \ar[d] & \DD(N(I))^{\pi_I-\cart} \ar[d] \\
\SSS(I) \ar[r]^-{\pi_I^*} & \SSS(N(I)).
} \]
Note that $\DD(N(I))^{\pi_I-\cart}$ over $\SSS(N(I))$ is not necessarily bifibered (the pull-back, resp.\@ push-forward functors will not
preserve the $\DD(N(I))^{\pi_I-\cart}$ subcategories, whereas the pull-back $\EE(I)$ is bifibered over $\SSS(I)$ by the following argument.
CoCartesian morphisms exist because for morphisms in the image of $\pi_I^*: \SSS(I) \rightarrow \SSS(N(I))$
the push-forward preserves the condition of being $\pi_I$-Cartesian. In the same way, 1-ary  Cartesian morphisms exist because 
for morphisms in the image of $\pi_I^*: \SSS(I) \rightarrow \SSS(N(I))$
the pull-back will preserve the condition of being $\pi_I$-Cartesian. For $n$-ary morphsims, $n\ge 2$ this need not to be true (the $n$-ary pull-backs are not necessarily ``computed point-wise'').
However, let $f \in \Hom_{\SSS(I)}(S_1, \dots, S_n; T)$ be a multimorphism. An adjoint of the push-forward
\[ (\pi_I^*f)_\bullet: \DD(N(I))_{\pi_I^*S_1}^{\pi_I-\cart} \times \dots \times \DD(N(I))_{\pi_I^*S_n}^{\pi_I-\cart} \rightarrow \DD(N(I))_{\pi_I^*T}^{\pi_I-\cart}  \]
w.r.t.\@ the $i$-th slot always exists, and is given by the usual pull-back $(\pi_I^*f)^{\bullet, i}$ followed by the right $\pi_I$-Cartesian projector of Proposition~\ref{PROPCARTPROJ}~(right). Since the right $\pi_I$-Cartesian projector exists only when $\DD \rightarrow \SSS$ is right fibered we can only show (FDer0 right) neglecting the multi-aspect if $\DD \rightarrow \SSS$ is {\em not} assumed to be right fibered as well. 
This shows that the pull-back $\EE(I) \rightarrow \SSS(I)$ is bifibered (with the mentioned restriction). 

A functor $\alpha: I \rightarrow J$ induces the following commutative diagram
\[ \xymatrix{
\DD(N(J))^{\pi_J-\cart} \ar[r]^{\alpha^*} \ar[d] & \DD(N(I))^{\pi_I-\cart} \ar[d] \\
\SSS(N(J)) \ar[r]^{\alpha^*} & \SSS(N(I))
} \]

Hence via pullback we get a diagram
\[ \xymatrix{
\EE(J) \ar[r]^{\alpha^*} \ar[d] & \EE(I) \ar[d] \\
\SSS(J) \ar[r]^{\alpha^*} & \SSS(I)
} \]
and the upper horizontal functor maps coCartesian morphism to coCartesian morphisms and Cartesian $1$-ary morphisms to Cartesian $1$-ary morphisms. 
This shows (FDer0 left) and the first part of (FDer0 right). The remaining part of (FDer0 right) will be shown in the end.

  
We now construct the 2-functoriality of $\EE$ and concentrate on the left case, the other being dual.
A natural transformation $\mu: \alpha \Rightarrow \beta$ where $\alpha, \beta: I \rightarrow J$ are functors can be encoded by a functor
 \[ \mu: I \times \Delta_1 \rightarrow J \]
 such that $\mu_0$ (restriction to $I=I \times e_0$) is $\alpha$ and $\mu_1$ (restriction to $I=I \times e_1$) is $\beta$. 
 We use the equivalence
 \[ (N(\pr_1), \pr_2 \circ \pi_{N(I) \times \Delta_1})^*:  \DD(N(I) \times \Delta_1)^{\pi_{I,\Delta_1}-\cart}_{\pi_{I,\Delta_1}^*S } \iso \DD(N(I \times \Delta_1))^{\pi_{I \times \Delta_1}-\cart}_{\pi_{I \times \Delta_1}^*S}\]
  (cf.\@ Proposition~\ref{PROPCARTPROJ2}).
 From an object  $\mathcal{E} \in \EE(J)$ over $S \in \SSS(J)$ we get an object
\[ \Box^{\pi_{I,\Delta_1}}_! (N(\pr_1), \pr_2 \circ \pi_{N(I) \times \Delta_1})_! N(\mu)^* \mathcal{E} \]
which defines a morphism
\[ f_{\bullet} \alpha^* \mathcal{E} \rightarrow \beta^* \mathcal{E} \]
where $f$ is the composition 
\[ \xymatrix{
 N(\alpha)^* \pi_J^* S \ar[d]^\sim  && N(\beta)^* \pi_J^* S \ar[d]^\sim \\
\pi_J^* \alpha^*S  \ar[rr]^{(\pi_J^* \ast \SSS(\mu))(S)} && \pi_J^* \beta^*S
} \]
This defines the 2-functoriality. 

The axioms (Der1--2) for $\EE$ are clear (use axiom (N2) for (Der1)).

For the axioms (FDer3--4) we concentrate on the left case again, the other is dual. 

(FDer3 left) Let $\alpha: I \rightarrow J$ be a functor in $\Dia$. By assumption relative left Kan extensions exist for $\DD$, i.e.\@ the functor
\[ N(\alpha)^*:  \DD(N(J))_{\pi_J^*S} \rightarrow \DD(N(I))_{\pi_I^*\alpha^*S} \]
has a left adjoint $N(\alpha)_!$. Since by Proposition~\ref{PROPCARTPROJ} a left 
$\pi_J$-Cartesian projector $\Box_!^{\pi_J}$ exist on $\DD(N(J))_{\pi_J^*S}$, we obtain also a left adjoint to $N(\alpha)^*$ restricted to the respective subcategories, namely
\[ \Box_!^{\pi_J} N(\alpha)_!:  \DD(N(I))_{\pi_I^*\alpha^*S}^{\pi_I-\cart} \rightarrow \DD(N(J))_{\pi_J^*S}^{\pi_J-\cart}. \]
 
(FDer4 left) Consider a diagram as in the axiom: 
\[ \xymatrix{
I \times_{/J} j \ar[r]^-\iota \ar[d]_p \ar@{}[rd]|{\Swarrow} & I \ar[d]^\alpha \\
j \ar@{^{(}->}[r] & J
} \]
and the following induced diagram:
\[ \xymatrix{
N(I \times_{/J} j)  = N(I) \times_{/J} j \ar[rr]^-{N(\iota)} \ar[d] \ar@{}[rrd] && N(I) \ar[d]^{N(\alpha)} \\
N(J) \times_{/J} j \ar[rr]^-{\iota_2} \ar[d]_{p_{N(J \times_{/J} j)}} \ar@{}[rrd]|{\Swarrow^\mu} && N(J) \ar[d] \\
j \ar@{^{(}->}[rr] && J
} \]
By definition of the left $\pi_J$-Cartesian projector we have that 
\[ n^* \Box_!^{\pi_J}  \cong p_{N(J \times_{/J} j),!} (\SSS(\mu))_\bullet \iota_2^* \]
where $n$ is any element of $N(J)$ mapping to $j$. 
Therefore
\[ n^* \Box_!^{\pi_J} N(\alpha)_! \cong p_{N(I \times_{/J} j),!} (N(\alpha)^*(\SSS(\mu)))_\bullet N(\iota)^*.   \]
%
%
%
Finally note that the composition
\[ \xymatrix{  \DD(N(I \times_{/J} j))^{\pi_{N(I \times_{/J} j)}-\cart}_{p^* j^* S }  \ar[rr]^-{\Box_!^{\pi} N(p)_!} & & \DD(N(j))_{\pi_j^*j^*S}^{\pi_j-\cart} \ar[rr]^-{\pi_{j,!}} & & \DD(j)_{j^*S}    } \]
is isomorphic to $p_{N(I \times_{/J} j),!}$ because left Cartesian projectors commute with relative left Kan extensions. 
By Lemma~\ref{BASICLEMMA} the second functor is an equivalence, and we deduce the isomorphism
\[ N(j)^* \Box_!^{\pi_J} N(\alpha)_! \cong \Box_!^{\pi_j} N(p)_! (N(\alpha)^*(\SSS(\mu)))_\bullet N(\iota)^*.   \]
A tedious check shows that this isomorphism can be identified with the base change morphism of (FDer4 left). 

(FDer5 right) In the right case of the Theorem (FDer0 left) has been established already. (FDer5 right) is the adjoint statement of the second part of (FDer0 left) \cite[Lemma~1.3.8]{Hor15}. 

(FDer5 left) By Lemma~\ref{LEMMAFDER5LEFT}, it suffices to prove (FDer5 left) for $p: I \rightarrow {\cdot}$, which means that the push-forward commutes with (homotopy) colimits in each variable.
That is, we have to see that the natural morphism
\[ \Box_! N(p)_! (N(p)^*\pi^*f)_\bullet(N(p)^*-, \dots, -, \dots, N(p)^*-) \rightarrow (\pi^*f)_\bullet (-, \dots, \Box_! N(p)_!, \dots, -)  \]
is an isomorphism. 
Now $\Box_!$ on $\DD(N(\cdot))$ is given by $\pi_! \pi^*$ for the projection $\pi: N(\cdot) \rightarrow \cdot$ (cf.\@ Lemma~\ref{BASICLEMMA}, 3--4.).
Therefore we may rewrite the morphism as
\[ \pi^* \pi_! N(p)_! (N(p)^*\pi^*f)_\bullet(N(p)^*-, \dots, -, \dots, N(p)^*-) \rightarrow (\pi^*f)_\bullet (-, \dots, \pi^* \pi_! N(p)_!,  \dots, -).  \]
Since all arguments, except the $i$-th one, are on $N(\cdot)$ and supposed to be $\pi$-Cartesian, they are in the essential image of $\pi^*$ as well ($\pi_! \pi^*$ is isomorphic to the identity on them, as just explained). 
Therefore it suffices to show (using FDer0 left) that
\[ \pi^* \pi_! N(p)_! (N(p)^*\pi^*f)_\bullet(N(p)^*\pi^*, \dots, -, \dots, N(p)^*\pi^*-) \rightarrow \pi^* (f_\bullet (-, \dots, \pi_! N(p)_!, \dots, -))  \]
is an isomorphism. 
This follows from (FDer5 left) for the original left fibered multiderivator $\DD \rightarrow \SSS$.

The remaining part of (FDer0 right): In the left case of the Theorem (FDer5 left) has been established already. The remaining part of (FDer0 right) is just the adjoint statement of (FDer5 left) \cite[Lemma~1.3.8]{Hor15}, hence it is satisfied automatically. 
In the right case of the Theorem,
by Lemma~\ref{LEMMAFDER0RIGHT}, it suffices to show (FDer0 right) for opfibrations of the form $\alpha: i \times_{/J} I \rightarrow I$. 
Axiom (N4 right) implies that also $N(\alpha): N(i \times_{/J} I) \rightarrow N(I)$ is an opfibration. 
By Lemma~\ref{LEMMAPROJOPFIB}, $N(\alpha)^*$ commutes with the right Cartesian projectors as well. Therefore the statement is clear for opfibrations of this form. 

\commentempty{
We have to show that for an opfibration $\alpha: I \rightarrow J$, and an $n$-ary morphism $f$ in $\SSS(J)$,  the natural morphism (derived from $N(\alpha)^*$ applied to a coCartesian one) 
\[ N(\alpha)^* \Box_*^{\pi_J} (\pi_J^*f)^{\bullet, i} ( \mathcal{E}_1, \overset{\widehat{i}}{\dots}, \mathcal{E}_n; \mathcal{F}) \rightarrow 
 \Box_*^{\pi_I} (\pi_I^*f)^{\bullet, i} ( N(\alpha)^*\mathcal{E}_1, \overset{\widehat{i}}{\dots}, N(\alpha)^*\mathcal{E}_n; N(\alpha)^*\mathcal{F})  \]
is an isomorphism. It suffices to see this point-wise hence let $n \in N(I)$ be an object and compute
\[ (N(\alpha)(n))^* \Box_*^{\pi_J} (\pi_J^*f)^{\bullet, i} ( \mathcal{E}_1, \overset{\widehat{i}}{\dots}, \mathcal{E}_n; \mathcal{F}) \rightarrow 
 n^* \Box_*^{\pi_I} (\pi_I^*f)^{\bullet, i} ( N(\alpha)^*\mathcal{E}_1, \overset{\widehat{i}}{\dots}, N(\alpha)^*\mathcal{E}_n; N(\alpha)^*\mathcal{F})  \]
 We have the formula
 \[ \Box_*^{\pi_I} = \pr_{1,*} \SSS(\mu)^\bullet \pr_2^* \]
 in which $\pr_1$ is a fibration. Hence
 \[ n^* \Box_*^{\pi_I} = p_{I,*} \SSS(\mu_I)^\bullet \iota_I^* \qquad (N(\alpha)(n))^* \Box_*^{\pi_I} = p_{J,*} \SSS(\mu_J)^\bullet \iota_J^* \]
 where we have
 \[ \vcenter{ \xymatrix{
N(i \times_{/I} I) \ar[r]^{N(\iota_I)} \ar[d]^{p_I}  & N(I) \ar[d] \\
i \ar[r] & I
} }
\quad \vcenter{ \xymatrix{
N(i \times_{/J} J) \ar[r]^{N(\iota_J)} \ar[d]^{p_J} & N(J) \ar[d] \\
j \ar[r] & J
} } \]
using (N4) where $i = \pi_I(n)$ and $j = \alpha(i)$. Using that $N(\iota_I)$ and $N(\iota_J)$ are opfibrations we finally have to show that the natural morphism
\[ p_{J,*} (N(\iota_J)^*\pi_J^*f)^{\bullet, i} ( N(\iota_I)^* \mathcal{E}_1, \overset{\widehat{i}}{\dots}, N(\iota_I)^* \mathcal{E}_n; N(\iota_I)^* \mathcal{F}) \rightarrow 
 p_{I,*} (N(\iota_I)^*\pi_I^*f)^{\bullet, i} ( N(\iota_I)^* N(\alpha)^* \mathcal{E}_1, \overset{\widehat{i}}{\dots}, N(\iota_I)^*N(\alpha)^*\mathcal{E}_n; N(\iota_I)^*N(\alpha)^*\mathcal{F})  \]

 
Now we have $p_{I} = p_{J} \circ N(\rho)$, where $\rho: i \times_{/I} I \rightarrow i \times_{/J} J$ is the functor induced by $\alpha$. 
Using (FDer5 right) we arrive at 
\[ p_{J,*} (N(\iota_J)^*\pi_J^*f)^{\bullet, i} ( N(\iota_J)^* \mathcal{E}_1, \overset{\widehat{i}}{\dots}, N(\iota_J)^* \mathcal{E}_n; N(\iota_I)^* \mathcal{F}) \rightarrow 
 p_{J,*} (N(\iota_J)^*\pi_J^*f)^{\bullet, i} ( N(\iota_J)^* \mathcal{E}_1, \overset{\widehat{i}}{\dots}, N(\iota_J)^* \mathcal{E}_n; \underline{ N(\rho)_* N(\rho)^* }  N(\iota_I)^* \mathcal{F})  \]
We may insert a $\pi_{i \times_{/J} J}$-Cartesian projector because these commute with right Kan extensions:
\[ p_{J,*} \Box_*^{\pi} (N(\iota_J)^*\pi_J^*f)^{\bullet, i} ( N(\iota_J)^* \mathcal{E}_1, \overset{\widehat{i}}{\dots}, N(\iota_J)^* \mathcal{E}_n; N(\iota_I)^* \mathcal{F}) \rightarrow 
 p_{J,*} \Box_*^{\pi}  (N(\iota_J)^*\pi_J^*f)^{\bullet, i} ( N(\iota_J)^* \mathcal{E}_1, \overset{\widehat{i}}{\dots}, N(\iota_J)^* \mathcal{E}_n; \underline{ N(\rho)_* N(\rho)^* }  N(\iota_J)^* \mathcal{F})  \]
 Now the functor
 \[ - \mapsto \Box_*^{\pi} (N(\iota_J)^*\pi_J^*f)^{\bullet, i} ( N(\iota_J)^* \mathcal{E}_1, \overset{\widehat{i}}{\dots}, N(\iota_J)^* \mathcal{E}_n; - )  \]
 commutes with $\pi_{i \times_{/J} J}$-Cartesian projector (see Lemma~\ref{}). 
 Therefore we may insert also a $\pi_{i \times_{/J} J}$-Cartesian projector into the right hand side and have to show that 
\[ p_{J,*} \Box_*^{\pi} (N(\iota_J)^*\pi_J^*f)^{\bullet, i} ( N(\iota_J)^* \mathcal{E}_1, \overset{\widehat{i}}{\dots}, N(\iota_J)^* \mathcal{E}_n; N(\iota_I)^* \mathcal{F}) \rightarrow 
 p_{J,*} \Box_*^{\pi}  (N(\iota_J)^*\pi_J^*f)^{\bullet, i} ( N(\iota_J)^*  \mathcal{E}_1, \overset{\widehat{i}}{\dots}, N(\iota_J)^* \mathcal{E}_n; \underline{\Box_*^{\pi}  N(\rho)_* N(\rho)^* }  N(\iota_J)^* \mathcal{F})  \]
 is an isomorphism. Hence it suffices to show that morphism induced by the unit 
\[  N(\iota_J)^* \rightarrow \Box_*^{\pi}  N(\rho)_* N(\rho)^* N(\iota_J)^* \] 
is an isomorphism on $\pi_I$-Cartesian objects. This is however a statement involving only $\EE$ for which it is proven already that it is a fibered derivator (not multiderivator) over $\SSS$. 
We have thus to show that in a usual fibered derivator the unit induces an isomorphism:
\[  \iota_J^* \rightarrow \rho_* \rho^* \iota_J^*  \] 
However $\rho$ has a right adjoint $\rho'$ with counit the identity because $\alpha$ is an opfibration:
\[ \id_J \rightarrow \rho' \rho   \qquad \rho \rho' = \id  \]
Furthermore the unit is the identity when applied to $\iota_J$
\[ \iota_J = \rho' \rho \iota_J   \]
The statement follows. }

That $\EE$ enlarges $\DD$ and that any other enlargement $\mathbb{F}$ is equivalent to $\EE$ is shown as follows. From Proposition~\ref{PROPCARTPROJ2} applied for $I= \cdot$  
and for $J \in \Dia'$ we get an equivalence
\[ \mathbb{D}(N(\cdot) \times J )^{\pi_{\cdot, J}-\cart}_{\pi_{\cdot, J}^*S} \cong \mathbb{D}(N(J))^{\pi_{J}-\cart}_{\pi_J^* S} \ \overset{\text{Def.}}{=}\ \EE(J)_S. \]
By (N5 left) applied to $I = \cdot$, and the derivator $I \mapsto \mathbb{D}(I \times J)_{\pr_2^*S}$ we get
 \[ \mathbb{D}(N(\cdot) \times J )^{\pi_{\cdot, J}-\cart}_{\pi_{\cdot, J}^*S} \cong \mathbb{D}(J)_{S}. \]
 All equivalences are compatible with pull-backs $\alpha^*$ and push-forwards, resp.\@ pull-backs along morphisms in $\SSS$.
 
With the same reasoning, setting $\Dia' = \Dia$, and $\mathbb{D}=\mathbb{F}$, we have for all $J \in \Dia$
\[ \mathbb{F}(J)_{S} \cong \mathbb{F}(N(J))^{\pi_{J}-\cart}_{\pi_J^* S}.  \]
Since $\mathbb{F}$ is equivalent to $\DD$ on $\Dia'$ we have 
\[ \ \mathbb{F}(N(J))^{\pi_{J}-\cart}_{\pi_J^* S} \cong \DD(N(J))^{\pi_{J}-\cart}_{\pi_J^* S} \overset{\text{Def.}}{=} \EE(J)_S.   \]

Finally, if the fibers of $\DD$ are right derivators with domain $\Posf$ (in the left case of the Theorem) then for all $S \in \SSS(K)$, with $K \in \Dia$, and for all functors $\alpha: I \rightarrow J$ in $\Posf$ the pull-back
\[ (\id \times \alpha)^* :  \DD(N(K) \times J)_{\pi_{K,J}^* \pr_1^*S} \rightarrow \DD(N(K) \times I)_{\pi_{K,I}^* \pr_1^* S} \]
has a right adjoint $(\id \times \alpha)_*$ such that Kan's formula holds true for it. It is easy to see that both $(\id \times \alpha)^*$  and $(\id \times \alpha)_*$  respect the subcategories of 
$\pi_{K,I}$-Cartesian, resp.\@ $\pi_{K,J}$-Cartesian objects. Therefore the fibers of $\EE$ are left derivators with domain $\Posf$ again, because
by Proposition~\ref{PROPCARTPROJ2} we have an equivalence
\[  \DD(N(K) \times J)^{\pi_{K,J}-\cart}_{\pi_{K,J}^* \pr_1^*S} \rightarrow \DD(N(K \times J))^{\pi_{K \times J}-\cart}_{\pi_{K \times J}^* \pr_1^*S} \ \overset{\text{Def.}}{=}\ \EE(K \times J)_{\pr_1^*S} \]
(via the pull-back).
\end{proof}

\begin{BEM}
The additional statement shows that if $\DD \rightarrow \SSS$ has stable, hence triangulated fibers 
(for this it is sufficient that the fibers are stable left and right derivators with domain $\Posf$) then also $\EE \rightarrow \SSS$ has stable, hence triangulated fibers. This allows to
establish, under additional conditions, that a left fibered multiderivator is automatically a right fibered multiderivator as well and vice versa (see \cite[\S 3.2]{Hor15}).
\end{BEM}
 

 
 
 In the proof of Theorem~\ref{MAINTHEOREM} we used the following Lemmas: 
 
 \begin{LEMMA}\label{LEMMAFDER0RIGHT}
The axiom (FDer0 right) in the definition of a right fibered multiderivator can be replaced by the following weaker axiom:
\begin{itemize}
\item[(FDer0 right')]
For each $I$ in $\Dia$ the morphism $p: \DD \rightarrow \SSS$ specializes to a fibered (multi)category and any functor of the form $\alpha: i \times_{/I} I \rightarrow I$ (note that this is an opfibration)
in $\Dia$ induces a diagram
\[ \xymatrix{
\DD(J) \ar[r]^{\alpha^*} \ar[d] & \DD(I) \ar[d]\\
\SSS(J) \ar[r]^{\alpha^*} & \SSS(I) 
}\]
of fibered (multi)categories, i.e.\@ the top horizontal functor maps Cartesian morphisms w.r.t.\@ the $i$-th slot to Cartesian morphisms w.r.t.\@ the $i$-th slot.
\end{itemize}
\end{LEMMA}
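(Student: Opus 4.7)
The plan is to derive the full axiom (FDer0 right) from (FDer0 right') in two stages: first establish it for point-evaluations $j : \cdot \to J$, and then extend to arbitrary $\alpha : I \to J$ in $\Dia$ via (Der2).

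For the first stage, I fix $j \in J$ and factor $j$ as $\cdot \xrightarrow{s} K \xrightarrow{p} J$ with $K := j \times_{/J} J$, where $s$ selects the initial object $\id_j$ and $p$ is the projection. The functor $p$ is one of the opfibrations to which (FDer0 right') directly applies, so $p^*$ preserves Cartesian multimorphisms in every slot. For $s^*$, the initiality of $\id_j$ in $K$ yields a categorical adjunction $s \dashv \pi$ (with $\pi : K \to \cdot$ the projection), and hence a pre-derivator adjunction $\pi^* \dashv s^*$. On objects coming from $\SSS(\cdot)$, this identifies $s^*$ with the relative right Kan extension $\pi_*$ provided by (FDer3 right). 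A Beck--Chevalley argument, combining (FDer4 right) with the adjoint statement of (FDer0 left) (which is part of the hypothesis and, by \cite[Lemma~1.3.8]{Hor15}, yields the relevant (FDer5 right)-type compatibility), then shows that $\pi_*$ commutes with the slot-wise Cartesian lifts $g^{\bullet,i}$, so $s^*$ preserves Cartesian multimorphisms.

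For the second stage, let $\alpha : I \to J$ be arbitrary and let $f : X_1, \ldots, X_n \to Y$ be a Cartesian multimorphism in $\DD(J)$ over $g \in \SSS(J)$. Using that each $\DD(I) \to \SSS(I)$ is a fibered multicategory (the first clause of (FDer0 right')), Cartesian-ness of $\alpha^* f$ in the $i$-th slot is equivalent to the canonical map $\eta_{\alpha^* f} : \alpha^* X_i \to (\alpha^* g)^{\bullet,i}(\alpha^* X_1, \ldots, \widehat{\alpha^* X_i}, \ldots, \alpha^* X_n; \alpha^* Y)$ being an isomorphism in $\DD(I)$. By (Der2), this can be tested point-wise: $\eta_{\alpha^* f}$ is an isomorphism iff $k^* \eta_{\alpha^* f}$ is an isomorphism for every $k \in I$. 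Since $k^* \alpha^* = \alpha(k)^*$, and by Step~1 the functor $\alpha(k)^*$ preserves both the multicategory structure and the slot-wise Cartesian lifts, we may identify $k^* \eta_{\alpha^* f}$ with $\eta_{\alpha(k)^* f}$. The latter is an isomorphism because $\alpha(k)^* f$ is Cartesian by Step~1, so $\eta_{\alpha^* f}$ is an isomorphism and $\alpha^* f$ is Cartesian, as required.

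The main obstacle is Step~1, specifically verifying that the identification $s^* \cong \pi_*$ is compatible with the fibered multicategory structure, as $\pi_*$ is a priori only defined on $\pi_\SSS^*$-Cartesian pieces. One must lift the categorical adjunction $s \dashv \pi$ to the fibered setting slot by slot, and invoke (FDer4 right) together with the (FDer5 right)-type adjoint statement of (FDer0 left) to read off that $\pi_*$ commutes with each $g^{\bullet,i}$; all other verifications in the plan are either essentially formal or direct applications of (Der2) and (FDer0 right').
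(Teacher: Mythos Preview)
Your approach is genuinely different from the paper's, and the difference is instructive.

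The paper never proves your Step~1 (that individual point evaluations $j^*$ preserve Cartesian multimorphisms). Instead it works with an arbitrary opfibration $\alpha:I\to J$ from the start, writes \emph{both} $i^*$ on $\DD(I)$ and $j^*$ on $\DD(J)$ (with $j=\alpha(i)$) via the comma formula $(-)^* \cong p_{*}\,\SSS(\mu)^{\bullet}\,\iota^*$ coming from (FDer4 right), applies (FDer0 right') to the two special opfibrations $\iota_I,\iota_J$, and then compares the two resulting expressions. After rewriting with $p_i = p_j\circ\rho$ and invoking (FDer5 right), the comparison reduces to the unit $\id\to\rho_*\rho^*$ being an isomorphism. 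This is exactly where the opfibration hypothesis on $\alpha$ is used: it forces $\rho:i\times_{/I}I\to j\times_{/J}J$ to have a left adjoint, so $\rho_*\rho^*\cong\id$. The paper's argument therefore never isolates the statement ``$j^*$ preserves Cartesian'' --- it only shows the two comma-descriptions agree.

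Your route instead tries to prove the absolute statement for point evaluations first, then bootstrap to all $\alpha$ by (Der2). Step~2 is clean and would indeed yield the conclusion for \emph{all} functors $\alpha$, not only opfibrations --- so you would get more than the lemma asks. The difficulty is entirely in Step~1, as you acknowledge. The identification $s^*\cong\pi_*$ holds only over bases of the form $\pi^*S$, whereas the objects you must feed in, namely $p^*X_\ell$ (equivalently $\iota_J^*X_\ell$), sit over $\iota_J^*S_\ell$, which is \emph{not} $\pi^*$ of anything; the bridge is the base-change $\SSS(\mu_j)^{\bullet}$. After that base-change the contravariant arguments are $\SSS(\mu_j)^{\bullet}\iota_J^*X_\ell$, and (FDer5 right) for $\pi=p_j$ only lets you move $\pi_*$ past $g^{\bullet,i}$ when those contravariant arguments are of the form $\pi^*(-)$. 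Showing $\SSS(\mu_j)^{\bullet}\iota_J^*X_\ell\cong \pi^*(j^*X_\ell)$, or otherwise circumventing this, is the real work, and your sketch does not carry it out. The paper sidesteps exactly this issue by never needing $\pi_*$ to commute with $g^{\bullet,i}$ on non-constant arguments: the opfibration assumption on $\alpha$ collapses the discrepancy via $\rho_*\rho^*\cong\id$ instead.

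A minor point: you cite (FDer0 left) as ``part of the hypothesis''. In the paper's setup a right fibered multiderivator carries (FDer5 right) as an axiom in its own right, so you may invoke it directly; routing through the adjoint equivalence with (FDer0 left) is unnecessary and potentially confusing about what is being assumed.
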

\begin{proof}
Let $\alpha$ now be an arbitrary opfibration. By axiom (Der2) it suffices to see that the natural morphism
\[ \alpha^* f^\bullet (-,\dots,-;-) \rightarrow (\alpha^* f)^\bullet (\alpha^* -,\dots,\alpha^* -;\alpha^* -) \]
is an isomorphism point-wise.

Using the homotopy Cartesian squares
\[ \xymatrix{
i \times_{/I} I \ar@{}[rd]|{\Nearrow^{\mu_i}} \ar[r]^-{\iota_I} \ar[d]_{p_i} &  I  \ar@{=}[d] \\
i \ar[r] & I 
} \quad \xymatrix{
j \times_{/J} J \ar@{}[rd]|{\Nearrow^{\mu_j}} \ar[r]^-{\iota_J} \ar[d]_{p_j} & J  \ar@{=}[d] \\
j \ar[r] &  J 
} \]
with $j = \alpha(i)$,
we have that $i^* \cong p_{i,*} \SSS(\mu_i)^\bullet \iota_I^*$ and $j^* \cong p_{j,*} \SSS(\mu_j)^\bullet \iota_J^*$. Using the assumption, i.e.\@ (FDer0 right'), the second statement of (FDer0 right) holds for $\iota_I$, and $\iota_J$, respectively. Thus, we are left to show that
\[ p_{j,*} \SSS(\mu_j)^\bullet (\iota_J^* f)^\bullet (\iota_J^*-,\dots,\iota_J^*-;\iota_J^*-) \rightarrow p_{i,*} \SSS(\mu_i)^\bullet (\iota_I^* f)^\bullet (\iota_I^*\alpha^* -,\dots,\iota_I^*\alpha^* -;\iota_I^*\alpha^* -) \]
is an isomorphism. We have $p_{I} = p_{J} \circ \rho$, where $\rho: i \times_{/I} I \rightarrow j \times_{/J} J$ is the functor induced by $\alpha$. Using (FDer5 right) we arrive at the morphism
\[ p_{j,*} \SSS(\mu_j)^\bullet (\iota_J^* f)^\bullet (\iota_J^*-,\dots,\iota_J^*-;\iota_J^*-) \rightarrow p_{j,*} \SSS(\mu_j)^\bullet (\iota_J^* f)^\bullet (\iota_J^* -,\dots,\iota_J^* -; \rho_* \rho^* \iota_J^* -) \]
induced by the unit $\id \rightarrow \rho_* \rho^*$. Since $\alpha$ is an opfibration, $\rho$ has a left adjoint $\rho'$ given as follows: It maps an object $j \rightarrow j'$ in $j \times_{/J} J$ to some (chosen for each such morphism) corresponding coCartesian morphism $i \rightarrow i'$. 
Hence $\rho_* = (\rho')^*$.
Since the unit 
\[ \id = \rho \circ \rho'   \]
is an equality the statement follows. 
 \end{proof}
 
 \begin{LEMMA}\label{LEMMAFDER5LEFT}
The axiom (FDer5 left) in the definition of a left fibered multiderivator can be replaced by the following weaker axiom.
 \begin{itemize}
 \item[(FDer5 left')] For any diagram $I \in \Dia$, and for any morphism $f \in \Hom(S_1, \dots, S_n; T)$ in $\SSS(\cdot)$ for some $n\ge 1$, the natural transformation of functors
\[ p_! (p^*f)_\bullet (p^*-, \cdots, p^*-,\ -\ , p^*-, \cdots, p^*-) \rightarrow  f_\bullet (-, \cdots, -,\ p_!-\ , -, \cdots, -), \]
where $p: I \rightarrow \cdot$  is the projection, is an isomorphism.
 \end{itemize}
  \end{LEMMA}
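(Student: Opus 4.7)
The strategy is to reduce the general statement of (FDer5 left) for an arbitrary functor $\alpha: I \to J$ to the case of the projection to a point, for which (FDer5 left') provides the desired isomorphism. The mechanism is the standard one: check pointwise via (Der2) and then resolve $\alpha_!$ by a comma-category construction via (FDer4 left).

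Fix $\alpha: I \to J$ in $\Dia$ and a multimorphism $f \in \Hom_{\SSS(J)}(S_1,\dots,S_n;T)$. We want to show that for $E_k \in \DD(J)_{S_k}$ ($k \neq i$) and $E_i \in \DD(I)_{\alpha^*S_i}$, the natural map
\[ \alpha_! (\alpha^* f)_\bullet(\alpha^* E_1,\dots,E_i,\dots,\alpha^* E_n) \longrightarrow f_\bullet(E_1,\dots,\alpha_! E_i,\dots,E_n) \]
is an isomorphism. By (Der2) it suffices to check this after applying $j^*$ for each object $j \in J$. Contemplate the homotopy Cartesian square
\[ \xymatrix{
I \times_{/J} j \ar[r]^-\iota \ar[d]_p \ar@{}[rd]|{\Swarrow^\mu} & I \ar[d]^\alpha \\
j \ar@{^{(}->}[r] & J
} \]
By (FDer4 left) we have a base change isomorphism $j^*\alpha_! \cong p_! \SSS(\mu)_\bullet \iota^*$ (and its obvious multi-variable variant applied to $f$).

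The plan is then to chase the left-hand side through the following chain: first use (FDer4 left) to rewrite $j^* \alpha_!$ as $p_! \iota^*$ (modulo the 2-cell $\mu$); next use (FDer0 left) (the compatibility of pull-backs with $(-)_\bullet$, which is part of the axioms for $p$-Cartesian morphisms) to commute $\iota^*$ past the multi-morphism, converting the arguments $\iota^*\alpha^*E_k = p^* j^* E_k$ for $k \neq i$ into objects pulled back along $p: I \times_{/J} j \to \cdot$; then invoke (FDer5 left') for the projection $p$ itself to move $p_!$ inside the multimorphism and land on $p_! \iota^* E_i$ in the $i$-th slot; finally use (FDer4 left) once more to recognise $p_! \iota^* E_i \cong j^* \alpha_! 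E_i$, and (FDer0 left) for $j^*$ to pull $j^*$ out of $f_\bullet$.

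A straightforward but tedious diagram chase, identifying each step with the naturality of the relevant base change and push-forward isomorphisms, shows that the resulting composite is precisely $j^*$ applied to the map we started with. Hence it is an isomorphism for every $j \in J$, and (Der2) concludes. The main annoyance is not any single step but bookkeeping: one must track the 2-cell $\mu$ and the distinction between the $i$-th and remaining slots through four rewritings so as to conclude that the natural map (rather than some other isomorphism) is what gets built.
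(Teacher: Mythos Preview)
Your approach is correct and essentially equivalent to the paper's, but you make it harder than necessary by overlooking one thing: in the definition of a left fibered multiderivator, axiom (FDer5 left) is only required for \emph{opfibrations} $\alpha: I \to J$, not for arbitrary functors. The paper's proof exploits this immediately. For an opfibration, $\alpha_!$ is computed fiber-wise, i.e.\ $j^*\alpha_! \cong p_{I_j,!}\iota_j^*$ where $I_j$ is the strict fiber and $p_{I_j}: I_j \to \cdot$ the projection; there is no comma category and no 2-cell $\mu$ to track. After applying $j^*$ and (FDer0 left), one lands directly on the morphism of (FDer5 left') for $p_{I_j}$, and the proof is three lines.

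Your route via (FDer4 left) and the comma category $I \times_{/J} j$ works too, and in fact establishes the projection formula for \emph{all} $\alpha$, not just opfibrations. But you then have to carry $\SSS(\mu)_\bullet$ through the argument: the equality $\iota^*\alpha^* E_k = p^* j^* E_k$ you write is really an isomorphism $\SSS(\mu)_\bullet\, \iota^*\alpha^* E_k \cong p^* j^* E_k$ (coming from the fact that $\DD(\mu)(E_k)$ is coCartesian over $\SSS(\mu)(S_k)$), and similarly one needs the naturality of $\SSS(\mu)$ to commute it past $(\iota^*\alpha^* f)_\bullet$. None of this is wrong, but it is the ``bookkeeping'' you allude to, and it is entirely avoidable once you remember that only opfibrations are at stake.
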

 \begin{proof}Let $\alpha: I \rightarrow J$ be an arbitrary opfibration. 
We have to show that the natural morphism
 \[ \alpha_! (\alpha^*f)_\bullet(\alpha^* -, \dots, -, \dots, \alpha^* -) \rightarrow f_\bullet( -, \dots, \alpha_! -, \dots,  -)    \]
 is an isomorphism. This can be proven point-wise by (Der2). Applying $j^*$ for an object $j \in J$, we arrive at 
 \[ p_{I_j, !} (p_{I_j}^*j^*f)_\bullet(p_{I_j}^* -, \dots, -, \dots, p_{I_j}^* -) \rightarrow (j^*f)_\bullet( -, \dots, p_{I_j,!} -, \dots,  -)    \]
 using (FDer0 left) and that $\alpha_!$ is computed fiber-wise for opfibrations. This is the statement of (FDer5 left'). 
 \end{proof}
 
 \begin{LEMMA}\label{LEMMAPROJOPFIB}
 For an opfibration of the form $\alpha: i \times_{/I} I \rightarrow I$ the pullback $N(\alpha)^*$ commutes with the right $\pi$-Cartesian projector, i.e.\@ the natural (exchange) morphism
 \[ N(\alpha)^* \Box^{\pi_I}_*  \rightarrow   \Box_*^{\pi_{i \times_{/I}I}} N(\alpha)^*  \]
 is an isomorphism. 
  \end{LEMMA}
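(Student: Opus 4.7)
The plan is to reduce the statement to a pointwise computation via (Der2), then express both sides using the explicit formula for the right Cartesian projector, and finally apply (N5 right) to evaluate the resulting Kan extensions as fiber values over initial objects.

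More precisely, fix $m \in N(J)$ with $J = i \times_{/I} I$, let $j := \pi_J(m)$ and $k := \alpha(j) \in I$. I would dualize the proof of Proposition~\ref{PROPCARTPROJ} to obtain the explicit formula for the right $\pi$-Cartesian projector: for any $n \in N(I)$ with $\pi_I(n) = k$, writing $C := k \times_{/I} I$,
\[ n^* \Box_*^{\pi_I} \mathcal{E} \cong p^I_* (\SSS(\mu_I))^\bullet \iota_I^* \mathcal{E}, \]
where $\iota_I : N(C) \to N(I)$ is the inclusion identified via (N4~right), $p^I : N(C) \to \cdot$ is the projection, and $\mu_I$ is the transformation from the comma correspondence. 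The analogous formula applies to $\Box_*^{\pi_J}$ with $D := j \times_{/J} J$. Since $\alpha$ restricts to a functor $\rho : D \to C$ with $\iota_I \rho = \alpha \iota_J$ and $N(\rho)^* \mu_I = \mu_J$ (by naturality of the comma correspondences), writing $\mathcal{G} := (\SSS(\mu_I))^\bullet \iota_I^* \mathcal{E}$, the two sides at $m$ become $p^I_* \mathcal{G}$ and $p^J_* N(\rho)^* \mathcal{G}$ respectively, and the natural exchange morphism identifies (via $p^J = p^I \circ N(\rho)$) with $p^I_*$ applied to the unit $\id \to N(\rho)_* N(\rho)^*$.

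Now I invoke (N5~right). Both slice categories $C$ and $D$ have initial objects ($\id_k$ and $\id_j$), and since $\alpha$ is an opfibration, $\rho$ sends $\id_j$ to $\id_k$. A routine check shows that $\mathcal{G}$ is $\pi_C$-Cartesian: $\iota_I^*$ sends $\pi_I$-Cartesian objects to $\pi_C$-Cartesian ones because morphisms in $N(C)$ over identities of $C$ project to morphisms in $N(I)$ over identities of $I$, and the transport $(\SSS(\mu_I))^\bullet$ preserves this property since $\mu_I$ is constant along such morphisms. Consequently, by (N5~right) applied to $C$ and to $D$, together with part~1 of Lemma~\ref{BASICLEMMA} (right version),
\[ p^I_* \mathcal{G} \cong n_I^* \mathcal{G}, \qquad p^J_* N(\rho)^* \mathcal{G} \cong n_J^* N(\rho)^* \mathcal{G} = (N(\rho)(n_J))^* \mathcal{G}, \]
for any choices $n_I \in N(C)$, $n_J \in N(D)$ over $\id_k$ and $\id_j$. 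Choosing $n_I := N(\rho)(n_J)$ (permitted by the flexibility in part~1 of Lemma~\ref{BASICLEMMA}, and in fact achieved on the nose in the concrete $N$ of Proposition~\ref{PROPCONSTRN}), both expressions agree. Unwinding shows that $\mu_I$ at $n_I$ and $\mu_J$ at $n_J$ are each identities (since the $f$-component is $\id_k$, resp.\@ $\id_j$), so both sides reduce to the single value $(n_I^{(1)})^* \mathcal{E}$ where $n_I^{(1)} \in N(I)$ projects to $k$ under $\pi_I$.

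The main obstacle is bookkeeping: verifying that the isomorphism thus constructed pointwise coincides with the natural exchange morphism (rather than some other isomorphism with the same source and target), and carefully tracking the compatibility of the transports $\mu_I$ and $\mu_J$ under $N(\rho)$. Conceptually this is just a naturality check on the correspondence data, but writing it out requires threading through the construction of the monads $T_C$, $T_D$ underlying the Cartesian projectors.
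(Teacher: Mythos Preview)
Your argument has a genuine gap at the step where you claim $\mathcal{G} = (\SSS(\mu_I))^\bullet \iota_I^* \mathcal{E}$ is $\pi_C$-Cartesian. Your justification (``$\iota_I^*$ sends $\pi_I$-Cartesian objects to $\pi_C$-Cartesian ones'') presupposes that the input $\mathcal{E}$ is already $\pi_I$-Cartesian. But the lemma concerns the projector $\Box_*^{\pi_I}$ applied to an \emph{arbitrary} object of $\DD(N(I))_{\pi_I^*S}$; in the application inside the proof of Theorem~\ref{MAINTHEOREM} the input is $(\pi_I^*f)^{\bullet,i}(\dots)$, which has no reason to be Cartesian. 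For general $\mathcal{E}$ the object $\mathcal{G}$ is not $\pi_C$-Cartesian, so (N5~right) cannot be invoked to identify $p^I_*\mathcal{G}$ with a fiber value, and the comparison with $p^J_* N(\rho)^*\mathcal{G}$ breaks down.

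The paper's proof avoids this entirely: it stays at the level of the global formula $\Box_*^{\pi_I} = \pr_{1,*}\,\SSS(\mu_I)^\bullet\,\pr_2^*$ and observes that, by (N4~right), $N(\alpha)$ is itself an opfibration and the relevant comma square is a genuine pullback, so base change gives $N(\alpha)^*\pr_{1,*}^I \cong \pr_{1,*}^{i\times_{/I}I}\,(N(\alpha)\times_{/I}N(\alpha))^*$. This works for arbitrary $\mathcal{E}$. Your pointwise reduction is in fact salvageable by a different route: for this particular $\alpha$ the functor $\rho: j\times_{/J}J \to k\times_{/I}I$ is an \emph{isomorphism} of categories (an object of $j\times_{/J}J$ is just a morphism $i'\to i'''$ under the given $i\to i'$, i.e.\@ exactly an object of $k\times_{/I}I$), hence $N(\rho)$ is an isomorphism and the unit $\id \to N(\rho)_*N(\rho)^*$ is trivially invertible. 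That would close the gap without any Cartesian hypothesis on $\mathcal{G}$, but it is not the argument you wrote.
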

\begin{proof}
Consider the following cube (where the objects in the rear face have been changed using (N4 right))
\begin{equation*}
 \xymatrix{
 & i \times_{/I} N(I) \times_{/I} N(I) \ar[rr]^{\pr_2^{i \times_{/I}I}} \ar@{}[ddrr]|(.65){\Nearrow^{\mu_{i \times_{I} I }}} \ar[dl]_{N(\alpha)\times_{/I}N(\alpha)} \ar[dd]^(.25){\pr_1^{i \times_{/I}I}} && i \times_{/I}  N(I) \ar[dd]^{} \ar[ld]_{N(\alpha)} \\ 
N(I) \times_{/I} N(I) \ar@{}[ddrr]|(.65){\Nearrow^{\mu_I}} \ar[rr]_(.4){\pr_2^{I}} \ar[dd]_{\pr_1^{I}} && N(I) \ar[dd]^(.65){} & \\
& i \times_{/I} N(I) \ar[rr]^(.35){N(\alpha)} \ar[dl]_(.4){N(\alpha)} && i \times_{/I} I \ar[dl]^{\alpha} \\
 N(I) \ar[rr]^{} &&  I }
\end{equation*}
We have by definition and (N4 right)
\[ \Box_*^{\pi_{i \times_{/I}I}} = \pr_{1,*}^{i \times_{/I}I} \SSS(\mu_{i \times_{/I}I})^\bullet (\pr_2^{i \times_{/I}I})^*, \]
and 
\[ \Box_*^{\pi_{I}} = \pr_{1,*}^I \SSS(\mu_I)^\bullet (\pr_2^I)^*. \]
Note that, in the cube, the rear face is just a pull-back of the front face. Since $N(\alpha)$ is an opfibration by (N4 right) we have therefore
\begin{eqnarray*}
  && N(\alpha)^* \pr_{1,*}^I \SSS(\mu_I)^\bullet (\pr_2^I)^*  \\
  &\cong& \pr_{1,*}^{i \times_{/I} I} (N(\alpha)\times_{/I}N(\alpha))^* \SSS(\mu_I)^\bullet (\pr_2^I)^*   \\
  &\cong& \pr_{1,*}^{i \times_{/I} I}  \SSS(\mu_{i \times_{/I} i})^\bullet (N(\alpha)\times_{/I}N(\alpha))^* (\pr_2^I)^*   \\
  &\cong& \pr_{1,*}^{i \times_{/I} I}  \SSS(\mu_{i \times_{/I} i})^\bullet  (\pr_2^{i \times_{/I} I})^* N(\alpha)^*.
\end{eqnarray*}
 \end{proof}
 

\begin{SATZ}\label{SATZCISINSKI}
Let $\mathcal{D} \rightarrow \mathcal{S}$ be a bifibration of multi-model categories, and let $\SSS$ be the represented pre-multiderivator of $\mathcal{S}$. 
For each small category $I$ denote by $\mathcal{W}_I$ the class of morphisms in $\Fun(I, \mathcal{D})$ which point-wise are weak equivalences in some fiber $\mathcal{D}_S$ (cf.\@ \cite[Definition 4.1.2]{Hor15}). 
The association
\[ I \mapsto \DD(I) := \Fun(I, \mathcal{D})[\mathcal{W}^{-1}_I]  \]
defines a left and right fibered multiderivator over $\SSS$ with domain $\Cat$. 
Furthermore the categories $\DD(I)$ are locally small. 
\end{SATZ}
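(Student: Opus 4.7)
The plan is to reduce the theorem to the already established analogue on a small diagram category, and then invoke the enlargement machinery twice to reach $\Cat$. First, one would establish (using the model-categorical setup of \cite{Hor15}, Chapter 4) that the assignment $I \mapsto \Fun(I,\mathcal{D})[\mathcal{W}_I^{-1}]$ restricted to $\Pos$ (or at least to $\Invpos$ and $\Dirpos$ separately) is already known to be a left and right fibered multiderivator over the corresponding restriction of $\SSS$. This is Cisinski's theorem in families; on posets, the relevant Kan extensions can be computed by bar/cobar constructions inside $\mathcal{D}$, and the required Quillen-adjunction-in-$n$-variables behaviour is exactly what a bifibration of multi-model categories provides.

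Second, to pass from $\Pos$ (or $\Invpos \cup \Dirpos$) to $\Cat$, I would apply Theorem~\ref{MAINTHEOREM} (together with Proposition~\ref{PROPCONSTRN}) in the same way as in the Corollaries after Theorem~\ref{MAINTHEOREM}: use the pair $(\Pos \subset \Cat^\circ)$ to obtain a left and right fibered multiderivator on $\Cat^\circ$, and then the pair $(\Cat^\circ \subset \Cat)$ to push this further to $\Cat$. Both steps apply simultaneously to the left and right case because the functors $N$ produced in Proposition~\ref{PROPCONSTRN} satisfy the left as well as the right variant of axioms (N4)–(N5). This yields a left and right fibered multiderivator $\EE$ on $\Cat$ over $\SSS$, together with an equivalence $\EE|_{\Pos} \simeq \DD|_{\Pos}$.

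The main obstacle — and the real content of the theorem beyond the formal enlargement — is to identify the abstract enlargement $\EE(I) = \Fun(N(I),\mathcal{D})[\mathcal{W}^{-1}]^{\pi_I-\cart}_{\pi_I^*S}$ with the naive localization $\Fun(I,\mathcal{D})[\mathcal{W}_I^{-1}]$ for a general small category $I$. For this I would proceed as follows. The functor $\pi_I : N(I) \to I$ induces a pull-back $\pi_I^* : \Fun(I,\mathcal{D}) \to \Fun(N(I),\mathcal{D})$ taking values in the subcategory of $\pi_I$-Cartesian objects. Using that $\mathcal{D}$ fibered over $\mathcal{S}$ is a bifibration of multi-model categories, one constructs a left adjoint $\pi_{I,!}$ (colimit along the fibers of $\pi_I$) at the level of the categories of functors. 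A Reedy-type argument, exploiting that $N(I)$ is an inverse or directed poset (or a disjoint union of such after the iterated enlargement), shows that $\pi_I^*$ preserves weak equivalences and that on the homotopy level this adjunction becomes an equivalence onto the $\pi_I$-Cartesian subcategory, because $\pi_I$-Cartesian means ``descended from $I$''. Uniqueness in Theorem~\ref{MAINTHEOREM} then forces the enlargement produced by the machine to be canonically equivalent to the naive $\Fun(I,\mathcal{D})[\mathcal{W}_I^{-1}]$, which finishes the identification.

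Finally, local smallness of $\DD(I) = \Fun(I,\mathcal{D})[\mathcal{W}_I^{-1}]$ would be deduced from the same comparison: on a poset in $\Pos$, local smallness of $\Fun(I,\mathcal{D})[\mathcal{W}_I^{-1}]$ is a standard consequence of the model structure on $\Fun(I,\mathcal{D})_S$ (Brown's lemma / cofibrant-fibrant replacement gives a set of morphisms computing Hom in the localization). Transporting this through the equivalence $\EE(I)_S \simeq \DD(N(I))^{\pi_I-\cart}_{\pi_I^*S}$ yields local smallness for arbitrary $I \in \Cat$, since $\pi_I$-Cartesian objects form a full subcategory of a locally small category. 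The identification of the abstract enlargement with the naive localization is what I expect to be the technically most delicate step; everything else is a formal consequence of the results of the preceding sections and of the Cisinski-type theorem on posets.
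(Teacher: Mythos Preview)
Your overall strategy --- enlarge from a smaller diagram category via Theorem~\ref{MAINTHEOREM} and then identify the abstract enlargement $\EE(I)$ with the naive localization $\Fun(I,\mathcal{D})[\mathcal{W}_I^{-1}]$ through the adjunction $(\pi_{I,!},\pi_I^*)$ --- is exactly the paper's idea. The identification you sketch is precisely the content of Proposition~\ref{CENTRALPROP} (together with Lemma~\ref{LEMMAREPLACEMENT} for the multi-structure), and local smallness is obtained in the way you indicate.

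Where you deviate from the paper, and where there is a genuine problem, is in the choice of starting point and enlargement route. You propose to start from $\Pos$ and apply the machine twice, via $(\Pos\subset\Cat^\circ)$ and $(\Cat^\circ\subset\Cat)$. But the input you need for this --- that $\DD$ is already a left \emph{and} right fibered multiderivator on $\Pos$ --- is not available: the result from \cite[Theorem~4.1.5]{Hor15} only gives a \emph{left} fibered multiderivator on $\Inv$ (and dually a right one on $\Dir$), because the Reedy-type model structures of \cite[4.1.18]{Hor15} require an inverse (resp.\ directed) index category, and a general poset is neither. Moreover, after two enlargement steps $\EE(I)$ would be a Cartesian subcategory of a Cartesian subcategory, i.e.\ something living over $N_1(N_2(I))$, and your identification argument, which is phrased for a single $\pi_I\colon N(I)\to I$, does not directly apply to that composite.

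The paper avoids both issues by enlarging in a single step, separately for the two halves: it takes $(\Inv\subset\Cat)$ to produce $\EE^{\mathrm{left}}$ and $(\Dir\subset\Cat)$ to produce $\EE^{\mathrm{right}}$, then shows $\DD\cong\EE^{\mathrm{left}}$ and $\DD\cong\EE^{\mathrm{right}}$ via Proposition~\ref{CENTRALPROP}. This yields directly that the naive $\DD$ is simultaneously a left and a right fibered multiderivator on $\Cat$, without ever needing both structures on the same intermediate domain.
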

\begin{proof}
In view of Theorem~\ref{MAINTHEOREM} it suffices to establish that we have equivalences of pre-multiderivators (compatible with the morphism to $\SSS$) 
\[ \DD \cong \EE^{\mathrm{left}}  \qquad  \DD \cong \EE^{\mathrm{right}} \]
where $\EE^{\mathrm{left}}$ (resp.\@ $\EE^{\mathrm{right}}$) is the left (resp.\@ right) fibered multiderivator --- the enlargement of $\DD$ --- constructed there w.r.t.\@ the $N$ given for the pair $(\Inv \subset \Cat)$ (resp.\@ $(\Dir \subset \Cat)$).

We sketch the left case, the other can be proven similarly. The idea goes back to Deligne in \cite[Expos\'e XVII, \S 2.4]{SGAIV3}, see also \cite[Proposition~4.1.9]{Hor15}.
The statement follows formally from the fact that, for all $S$, the localization of the fiber $\Fun(I, \mathcal{D})_{S}$ is isomorphic to the fiber $\EE(I)_S$ (Proposition~\ref{CENTRALPROP} below) and that the push-forward and pull-back functors can be derived
using left or right replacement functors, which exist by Lemma~\ref{LEMMAREPLACEMENT}.  

Let $I \in \Cat$ be a small category. 
We fix push-forward functors $f_\bullet^{\mathcal{D}}$, and $f_\bullet^{\EE}$, for the bifibrations $\Fun(I, \mathcal{D}) \rightarrow \SSS(I)$, and $\EE(I) \rightarrow \SSS(I)$, respectively. 
Note that it is not yet established that $\DD(I) = \Fun(I, \mathcal{D})[\mathcal{W}_I^{-1}] \rightarrow \SSS(I)$ is an opfibration.

We have a natural functor
\[ \Fun(I, \mathcal{D})[\mathcal{W}_I^{-1}] \rightarrow  \EE(I) \]
induced by $\pi_I^*$ and we will construct a functor going in the other direction
\[ \EE(I) \rightarrow \Fun(I, \mathcal{D})[\mathcal{W}^{-1}]. \]

By Proposition~\ref{CENTRALPROP} every object in $\EE(I)$ lies in the essential image of $\pi^*_I$, hence any morphism in $\EE(I)$ is of the form:
\[ \xi: \pi^*_I\mathcal{E}_1, \dots, \pi_I^*\mathcal{E}_n \rightarrow \pi_I^*\mathcal{F}  \]
lying over $\pi^*_If$ for some $f \in \Hom_{\SSS(I)}(S_1, \dots, S_n; T)$ --- or equivalently ---
\[ (\pi^*f)_\bullet^{\EE}(\pi^*\mathcal{E}_1, \dots, \pi^*\mathcal{E}_n) \rightarrow \pi^*\mathcal{F}  \]
in the fiber $\EE(I)_T$. It can be represented by a morphism in $\Fun(N(I), \mathcal{D})_{\pi^*T}$ of the form

\[ \pi^*_I (f_\bullet^{\mathcal{D}}(\widetilde{Q}\mathcal{E}_1, \dots, \widetilde{Q}\mathcal{E}_n)) \rightarrow \pi_I^*\mathcal{F}  \]
(in which all functors are underived functors).
Since the underived $\pi_I^*$ is fully-faithful by (N3)\footnote{This holds true for $\alpha^*: \Fun(J, \mathcal{D}) \rightarrow \Fun(I, \mathcal{D})$ for any category $\mathcal{D}$ and any functor $\alpha: I \rightarrow J$ which is surjective on objects and morphisms, and with connected fibers.}, this is the image under $\pi^*_I$ of a morphism
\[ \xi'': f_\bullet^{\mathcal{D}}(\widetilde{Q}\mathcal{E}_1, \dots, \widetilde{Q}\mathcal{E}_n) \rightarrow \mathcal{F}. \]
Proposition~\ref{CENTRALPROP} shows that $\xi''$ is well-defined in 
$\Fun(I, \mathcal{D})_{\pi^*T}[\mathcal{W}_{(I,T)}^{-1}]$ hence also well-defined in $\Fun(I, \mathcal{D})[\mathcal{W}_{I}^{-1}]$ because $\mathcal{W}_{(I,T)} \subset \mathcal{W}_{I}$. 

Equivalently $\xi''$ gives rise to a morphism
\[ \xi': \widetilde{Q}\mathcal{E}_1, \dots, \widetilde{Q}\mathcal{E}_n \rightarrow \mathcal{F} \]
over $f$, which composed with the formal inverses of the morphisms 
\[ \widetilde{Q}\mathcal{E}_i \rightarrow \mathcal{E}_i, \]
we define to be the image of $\xi$. A small check shows that this defines indeed a functor which is inverse to the one induced by $\pi^*_I$. 
\end{proof}


Let $\mathcal{D} \rightarrow \mathcal{S}$ be a bifibration of multi-model categories and let $\DD \rightarrow \SSS$ be the morphism of pre-multiderivators defined as in Theorem~\ref{SATZCISINSKI} (cf.\@ also \cite[Definition 4.1.2]{Hor15}), however, {\em with domain $\Inv$}. It is a left fibered multiderivator, satisfying also (FDer0 right), by \cite[Theorem 4.1.5]{Hor15}.

\begin{PROP}[left]\label{CENTRALPROP}
Let $I \in \Cat$ be a small category. 
Then $\pi_I^*$ induces an equivalence 
\[ \Fun(I, \mathcal{D})_{S}[\mathcal{W}^{-1}_{(I,S)}]  \cong \DD(N(I))^{\pi_I-\cart}_{\pi_I^*S} \]
where $\mathcal{W}^{-1}_{(I,S)}$ is the class of morphisms in $\Fun(I, \mathcal{D})_{S}$ which are point-wise in the corresponding $\mathcal{W}_{S_i}$ (weak equivalences in the
model structure on the fiber $\mathcal{D}_{S_i}$). 
\end{PROP}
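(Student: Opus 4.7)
The plan is to show that the functor induced by $\pi_I^*$ is an equivalence by constructing a quasi-inverse via rectification, along the lines of Deligne's argument in \cite[Expos\'e XVII, \S 2.4]{SGAIV3} that the paper already cites. First note that $\pi_I^*$ is well-defined on the asserted source and target: it sends pointwise weak equivalences in $\Fun(I, \mathcal{D})_S$ to pointwise weak equivalences in $\Fun(N(I), \mathcal{D})_{\pi_I^*S}$ (hence descends to localizations), and the image is automatically $\pi_I$-Cartesian because $\pi_I^* F$ sends every morphism above an identity of $I$ to the identity morphism in $\mathcal{D}$.

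The key observation is that the underived functor $\pi_I^*: \Fun(I, \mathcal{D})_S \to \Fun(N(I), \mathcal{D})_{\pi_I^* S}$ is already fully faithful, using only axiom (N3): since $\pi_I$ is surjective on objects and morphisms with connected fibers, a functor $N(I) \to \mathcal{D}_{\pi_I^*S}$ descends uniquely along $\pi_I$ if and only if it sends every morphism above an identity of $I$ to an identity, and morphisms between such functors are uniquely determined by their restriction to any section. Call the functors in this essential image \emph{strictly} $\pi_I$-Cartesian; they are in particular $\pi_I$-Cartesian in the sense of Definition~\ref{DEFPROJECTOR}.

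For essential surjectivity, I would proceed as follows. Given a class $[Y] \in \DD(N(I))^{\pi_I-\cart}_{\pi_I^*S}$, pick a representative $Y: N(I) \to \mathcal{D}_{\pi_I^*S}$. The $\pi_I$-Cartesianness of $[Y]$ says exactly that each structure map $Y(\mu)$ for $\mu$ over an identity of $I$ is a weak equivalence in the appropriate fiber, but not necessarily an identity. Since $N(I)$ is an inverse category in the left case (pair $(\Inv \subset \Cat)$), the replacement functors of Lemma~\ref{LEMMAREPLACEMENT} apply, producing a Reedy-type fibrant resolution $\widetilde{Y}$. One then carries out a Reedy-style rectification, inductively over the inverse degree filtration on $N(I)$, solving at each stage a lifting problem against the canonical trivial fibration that collapses the identity fibers, to produce a strictly $\pi_I$-Cartesian $\widetilde{Y}' \simeq \widetilde{Y}$. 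By the underived fully faithfulness above, $\widetilde{Y}' = \pi_I^* X$ for a unique $X \in \Fun(I, \mathcal{D})_S$, and $[X]$ maps to $[Y]$.

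Fully faithfulness on the localizations follows by the same rectification applied to morphisms: a zig-zag representing a morphism between two objects of the form $\pi_I^* X, \pi_I^* X'$ in $\DD(N(I))^{\pi_I-\cart}$ can be replaced by one lying entirely in the strictly $\pi_I$-Cartesian subcategory, which by (N3) is the image under $\pi_I^*$ of a unique zig-zag downstairs. The main obstacle will be the rectification step that upgrades a homotopically $\pi_I$-Cartesian object to a strictly $\pi_I$-Cartesian one; this is where the assumption that $N(I)$ is inverse is essential, and is genuinely more subtle than a formal adjunction argument, since the ambient category $\mathcal{D}_{\pi_I^*S}$ is only a model category and the inductive lifts must be arranged coherently across the whole inverse diagram before one can invoke the elementary descent along $\pi_I$ that (N3) provides.
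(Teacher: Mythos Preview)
Your strategy differs substantially from the paper's, and the gap you yourself flag is real. The paper does not attempt any rectification. Instead it observes that the underived $\pi_I^*$ has a left adjoint $\pi_{I,!}^{(S)}$, shows that $\pi_{I,!}^{(S)} Q$ (with $Q$ the Reedy \emph{cofibrant} replacement on $\Fun(N(I),\mathcal{D})_{\pi_I^*S}$) is an absolute left derived functor, and then verifies that the unit and counit of the resulting derived adjunction are isomorphisms on $\pi_I$-Cartesian objects. Both checks are pointwise: after applying $n^*$ (resp.\ $i^*$) they reduce, via the base-change square~(\ref{eqdiacomma}) and axiom (N4 left), to the identification $n^* \cong p_!\,\SSS(\mu)_\bullet\,\iota^*$ on Cartesian objects, which is precisely Lemma~\ref{BASICLEMMA}, part~3. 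The quasi-inverse is thus the derived relative left Kan extension, and no strictification ever enters.

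Your rectification step, by contrast, is only sketched. After any Reedy resolution the transition maps of $\widetilde{Y}$ over $\pi_I$-fibers are still merely weak equivalences, and producing a strictly $\pi_I$-Cartesian replacement requires a coherent system of lifts over the entire inverse skeleton of $N(I)=(\int\mathcal{N}^\circ(I))^{\op}$; objects lying over a fixed $i\in I$ occur in every degree, so the inductive hypothesis you would need is not local to a single filtration stage. You also invoke Lemma~\ref{LEMMAREPLACEMENT} for a fibrant resolution on $\Fun(N(I),\mathcal{D})$, but that lemma constructs replacement functors $\widetilde{Q},\widetilde{R}$ on $\Fun(I,\mathcal{D})_S$, not on the $N(I)$-side, and in the left case the relevant replacement is cofibrant, not fibrant. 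Finally, your fully-faithfulness argument via ``rectifying zig-zags'' is not justified: without a calculus of fractions or an explicit homotopy-function-complex description, there is no reason a zig-zag in $\DD(N(I))$ between objects of the form $\pi_I^*X$ can be moved into the strictly $\pi_I$-Cartesian subcategory. The paper's adjunction argument sidesteps all of this: once $\pi_{I,!}Q$ is known to be the derived left adjoint, fully-faithfulness and essential surjectivity follow from the unit/counit isomorphisms, and those are consequences of the abstract derivator machinery already in place.
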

There is an obvious right variant of the Proposition which we leave to the reader to state. Note, however, that for given $I$, the left hand side category is the same in both cases!
Note that $\DD(N(I))_{\pi_I^*S} = \Fun(N(I), \mathcal{D})_{\pi_I^*S}[\mathcal{W}^{-1}_{(N(I),\pi_I^*S)}]$
by \cite[Proposition 4.1.29]{Hor15}.
\begin{proof}
We have the (underived) adjunction
\[ \xymatrix{ \Fun(I, \mathcal{D})_{S} \ar@/^20pt/[r]^{\pi^*_I} &  \ar@/^20pt/[l]^{\pi_{I,!}^{(S)}} \Fun(N(I), \mathcal{D})_{\pi_I^*S}   } \]
with $\pi_{I,!}^{(S)}$ left adjoint. Both sides are equipped with classes classes $\mathcal{W}_{(I, S)}$, and $\mathcal{W}_{(N(I), \pi_I^*S)}$, respectively, of weak equivalences, and the right hand side is equipped even with the Reedy model category structure defined in 
\cite[4.1.18]{Hor15}. For the functors the following holds true:

\begin{enumerate}
\item
$\pi_I^*$ is exact (i.e.\@ respects the classes $\mathcal{W}_{(I, S)}$ and $\mathcal{W}_{(N(I), \pi_I^*S)}$). 

\item
$\pi_{I}^*$, when restricted to the localizations, has still a left adjoint defined by $\pi_{I,!} Q$, where $Q$ is the cofibrant resolution. 
Proof: It suffices to show that $\pi_{I,!} Q$ defines a absolute left derived functor. For this it suffices to see that $\pi_{I,!}$ maps weak equivalences
between cofibrant objects to weak equivalences. This can be checked point-wise. Consider the 2-Cartesian diagram
\begin{equation}\label{eqdiacomma} \vcenter{ \xymatrix{
 N(I) \times_{/I} i \ar[r]^-{\iota} \ar[d]^p \ar@{}[rd]|{\Swarrow^\mu} & N(I) \ar[d]^{\pi_I} \\
i \ar[r] & I
} } \end{equation}
We have the following isomorphism between underived functors
\[ i^* \pi_{I,!}  \cong p_! \SSS(\mu)_\bullet \iota^* . \]
Now, $\iota^*$ preserves cofibrant objects (w.r.t.\@ the model structure considered in \cite[4.1.18]{Hor15}) by \cite[Lemma 4.1.27]{Hor15}
and we know that $\SSS(\mu)_\bullet$ and $p_!$ both map weak equivalences between cofibrant objects to weak equivalences (both are left Quillen). Therefore $\pi_{I,!}$ maps weak equivalences between cofibrant objects to weak equivalences as well. 
\end{enumerate}

We have to show that the unit (between the derived functors)
\[  \id \rightarrow \pi_I^* \pi_{I,!} \]
is an isomorphism on $\pi_I$-Cartesian objects. This can be shown after applying $n^*$ for any $n \in N(I)$. Set $i:=\pi_I(n)$. We get:
\[  n^* \rightarrow i^* \pi_{I,!} \]
which is the same as 
\[ (n')^* \iota^* \rightarrow p_{!} \SSS(\mu)_\bullet \iota^* \]
where $n'=(n, \id_i) \in N(I) \times_{/I}i$. However this is an isomorphism by Lemma~\ref{BASICLEMMA}, 3.

We have to show that the counit (between the derived functors)
\[ \pi_{I,!} \pi_I^* \rightarrow \id \]
is an isomorphism. Again, it suffices to see this point-wise. After applying $i^*$ and using the above we arrive at the morphism
\[ p_{!} \SSS(\mu)_\bullet \iota^* \pi_I^* \rightarrow i^* \]
for $p$ and $\iota$ as in 2.

On $\pi_I$-Cartesian objects, $p_{N(I) \times_{/I} i,!}$ is equal to the evaluation at any $n$ mapping to $i$ by Lemma~\ref{BASICLEMMA}, 3.
\end{proof}

Recall that for a functor $F: \mathcal{C}_1 \times \cdots \times \mathcal{C}_n \rightarrow \mathcal{D}$, and for given classes of ``weak equivalences'' $\mathcal{W}_{\mathcal{C},1}, \dots, \mathcal{W}_{\mathcal{C},n}, \mathcal{W}_{\mathcal{D}}$ we call a {\em left replacement functor 
adapted to $F$} a collection of endofunctors $Q_i: \mathcal{C}_i \rightarrow \mathcal{C}_i$  with natural transformations $Q_i \Rightarrow \id_{\mathcal{C}_i}$ consisting point-wise
of weak equivalences such that $F \circ (Q_1, \dots, Q_n)$ maps weak equivalences to weak equivalences. It follows that $F \circ (Q_1, \dots, Q_n)$
is an absolute left derived functor of $F$. Similarly for the right case. 

\begin{LEMMA}\label{LEMMAREPLACEMENT}
Let $(N, \pi)$ be the functor and natural transformation constructed in Proposition~\ref{PROPCONSTRN} for the pair $(\Inv \subset \Cat)$ (a left case) 
and $(\widetilde{N}, \widetilde{\pi})$ the ones for the pair $(\Dir \subset \Cat)$ (a right case).
Let $I \in \Cat$ be a small category and let $f \in \Hom_{\SSS(I)}(S_1, \dots, S_n; T)$ be a multimorphism. 
On $\prod_i \Fun(I, \mathcal{D})_{S_i}$ the functor $\widetilde{Q}:= \pi_{I,!}^{(S_i)} Q \pi_I^*$, where $Q$ is the cofibrant replacement in the Reedy model category $\Fun(N(I), \mathcal{D})_{\pi_I^* S_i}$ \cite[4.1.18]{Hor15}, is a left replacement functor adapted to $f_\bullet$ by virtue of the composition
\[  \pi_{I,!}^{(S_i)} Q \pi_I^* \rightarrow  \pi_{I,!}^{(S_i)} \pi_I^* \rightarrow \id. \]
In particular $f_\bullet$ has a total left derived functor. 
Similarly the functor $\widetilde{R}:=\widetilde{\pi}_{I,*}^{(T)} R \widetilde{\pi}_I^*$, where $R$ is the fibrant replacement in the Reedy model category $\Fun(\widetilde{N}(I), \mathcal{D})_{\widetilde{\pi}^*_I T}$ (the opposite of \cite[4.1.18]{Hor15}), is a right replacement functor adapted to $f^{\bullet, i}$ (in the covariant argument) by virtue of the composition
\[ \widetilde{\pi}_{I,*}^{(T)} R \widetilde{\pi}_I^* \leftarrow  \widetilde{\pi}_{I,*}^{(T)} \widetilde{\pi}_I^* \leftarrow \id. \]
More precisely, the functor
\[ f^{\bullet,i}( \widetilde{Q}^{\op}, \overset{\widehat{i}}{\dots}, \widetilde{Q}^{\op}; \widetilde{R}) \]
maps weak equivalences to weak equivalences. 
In particular $f^{\bullet,i}$ as a functor
\[ \Fun(I, \mathcal{D})_{S_1}^{\op} \times \overset{\widehat{i}}{\cdots} \times \Fun(I, \mathcal{D})_{S_n}^{\op} \times \Fun(I, \mathcal{D})_{T} \rightarrow \Fun(I, \mathcal{D})_{S_i}  \]
has a total right derived functor. 
The so constructed derived functors form an adjunction in $n$ variables again. 
\end{LEMMA}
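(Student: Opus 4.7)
The plan is to treat the left case first; the right case follows by a completely dual argument, and the derived adjunction in $n$ variables will then descend from the underived $f_\bullet \dashv f^{\bullet,i}$ once both derived functors are known to exist.

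First, I verify that the composition
\[ \pi_{I,!}^{(S_i)} Q \pi_I^* \Rightarrow \pi_{I,!}^{(S_i)} \pi_I^* \Rightarrow \id, \]
defining $\widetilde{Q} \Rightarrow \id$, is a pointwise weak equivalence. This is essentially the derived counit computation carried out in the proof of Proposition~\ref{CENTRALPROP}: the derived left adjoint of $\pi_I^*$ is represented by $\pi_{I,!} Q$, and its counit, evaluated on objects of the form $\pi_I^*X$ (which are $\pi_I$-Cartesian), is shown there to be a weak equivalence.

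Second, to see that $f_\bullet \circ (\widetilde{Q}_1,\dots,\widetilde{Q}_n)$ sends (pointwise) weak equivalences to weak equivalences, I argue pointwise at each $j \in I$. Using the formula $j^*\pi_{I,!}^{(S_i)} \cong p_!\SSS(\mu)_\bullet\iota^*$ extracted from diagram~\eqref{eqdiacomma}, one has $j^*\widetilde{Q}X_i = p_!\SSS(\mu)_\bullet\iota^* Q\pi_I^*X_i$. Since $Q\pi_I^*X_i$ is Reedy cofibrant, $\iota^*$ preserves Reedy cofibrancy by \cite[Lemma~4.1.27]{Hor15}, and $\SSS(\mu)_\bullet$ and $p_!$ are left Quillen, the value $j^*\widetilde{Q}X_i$ is cofibrant in the fiber model category $\mathcal{D}_{j^*S_i}$; the same chain shows $\widetilde{Q}$ sends pointwise weak equivalences to pointwise weak equivalences between cofibrant objects. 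Since $f_\bullet$ is computed pointwise from the bifibration $\mathcal{D}\to\mathcal{S}$, the defining Quillen-adjunction-in-$n$-variables property ensures that at each $j$ the functor $(j^*f)_\bullet$ preserves weak equivalences in any one variable whenever the remaining variables are held at cofibrant objects---exactly our situation.

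The right case is formally dual: using $(\widetilde{N},\widetilde{\pi})$, the fibrant replacement $R$ in the opposite Reedy structure, and the dual pointwise formula, one sees that $j^*\widetilde{R}\mathcal{F}$ is fibrant in $\mathcal{D}_{j^*T}$. Since $f^{\bullet,i}$ is contravariant in the $(n-1)$ slots corresponding to $S_j$ with $j\neq i$ and covariant in the $T$-slot, inserting $\widetilde{Q}^{\op}$ in the former and $\widetilde{R}$ in the latter matches the Quillen-in-$n$-variables hypothesis, yielding preservation of weak equivalences in each variable. Finally, the adjunction in $n$ variables between the derived functors descends from the underived $f_\bullet \dashv f^{\bullet,i}$: the derived unit and counit are built from the underived ones by precomposing with $\widetilde{Q}\Rightarrow\id$ and postcomposing with $\id\Rightarrow\widetilde{R}$, both of which are pointwise weak equivalences, and the triangle identities survive passage to the localizations. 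The main technical obstacle will be the cofibrancy analysis in the second step: one must verify carefully that the composition of left Quillen functors in the pointwise formula really does land in cofibrant objects of the fiber, so that the multi-variable Quillen property of the bifibration can be applied to extract weak-equivalence preservation from $(j^*f)_\bullet$.
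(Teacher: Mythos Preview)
Your proposal is correct. For the left case (showing $\widetilde{Q}\Rightarrow\id$ is a pointwise weak equivalence and that $\widetilde{Q}$ lands in pointwise cofibrant objects via the Kan formula $j^*\pi_{I,!}\cong p_!\,\SSS(\mu)_\bullet\,\iota^*$, then invoking the multi-Quillen property of $f_\bullet$ pointwise) you follow exactly the paper's line of argument.

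For the right case you diverge slightly. You argue pointwise on $I$: the dual Kan formula shows $j^*\widetilde{R}\mathcal{F}$ is fibrant in $\mathcal{D}_{j^*T}$, and since $f^{\bullet,i}$ is computed pointwise, the fiberwise Quillen-in-$n$-variables hypothesis applies directly at each $j\in I$. The paper instead uses the algebraic identity
\[
f^{\bullet,i}(\widetilde{Q}-,\dots;\widetilde{\pi}_{I,*}R\widetilde{\pi}_I^*-)\ \cong\ \widetilde{\pi}_{I,*}\,(\widetilde{\pi}_I^*f)^{\bullet,i}(\widetilde{\pi}_I^*\widetilde{Q}-,\dots;R\widetilde{\pi}_I^*-)
\]
(which is the formal mate of the underived adjunctions $f_\bullet\dashv f^{\bullet,i}$ and $\widetilde{\pi}_I^*\dashv\widetilde{\pi}_{I,*}$), and then works inside the Reedy model structure on $\Fun(\widetilde{N}(I),\mathcal{D})_{\widetilde{\pi}_I^*T}$: there $(\widetilde{\pi}_I^*f)^{\bullet,i}$ is genuinely part of a Quillen adjunction in $n$ variables (cofibrations being pointwise), so its output on (pointwise cofibrant, Reedy fibrant) inputs is Reedy fibrant, and the right Quillen functor $\widetilde{\pi}_{I,*}$ then preserves the weak equivalences. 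Your route is more elementary and avoids the rewriting trick; the paper's route has the minor advantage of packaging the Quillen-in-$n$-variables verification into an existing model structure rather than reassembling it point by point.
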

\begin{proof}
As usual, we omit the bases from the relative Kan extension functors, they are clear from the context.
Let $i \in I$ be an object. Note that 
\[  i^* \pi_{I,!} Q \pi_I^* \cong p_* \SSS(\mu)^\bullet \iota^* Q \pi_I^* \] 
(Notation as in (\ref{eqdiacomma})) and we have seen on the proof of Proposition~\ref{CENTRALPROP} that $p_*$,  $\SSS(\mu)^\bullet$, and $\iota^*$ preserve cofibrations and weak equivalences between cofibrations. 
Therefore $\pi_{I,!} Q \pi^*_I$ has image in point-wise cofibrant objects. 
$f_\bullet$ maps point-wise weak equivalences between point-wise cofibrant objects to point-wise weak equivalences. 

We have
\[ f^{\bullet, j} (\pi_{I,!} Q \pi^*_I , \dots, \pi_{I,!} Q  \pi_I^*; \widetilde{\pi}_{I,*} R \widetilde{\pi}_I^*) \cong \widetilde{\pi}_{I,*} (\widetilde{\pi}_I^* f)^{\bullet,j} (\widetilde{\pi}_I^* \pi_{I,!} Q \pi_I*, \dots, \widetilde{\pi}^*_I \pi_{I,!} Q \pi_I^*;  R \widetilde{\pi}_I^*)  \]
and $(\widetilde{\pi}_I^* f)^{\bullet,j} (\widetilde{\pi}_I^* \pi_{I,!} Q \pi_I^*, \dots, \widetilde{\pi}^*_I \pi_{I,!} Q \pi_I^*;  R \widetilde{\pi}_I^*)$ maps weak equivalences to weak equivalences and maps to fibrant objects in the Reedy model category structure (opposite to \cite[4.1.18]{Hor15}) because $(\widetilde{\pi}_I^* f)^{\bullet,j}$ is part of a Quillen adjunction in $n$ variables and cofibrations are the point-wise ones in that model-category structure. 
Therefore $f^{\bullet, j} (\pi_{I,!} Q \pi^*_I , \dots, \pi_{I,!} Q  \pi_I^*; \widetilde{\pi}_{I,*} R \widetilde{\pi}_I^*)$ maps weak equivalences to weak equivalences. 
%
%
%
\end{proof}

\begin{BEM}
In the non-fibered case, Cisinski \cite[Th\'eor\`eme 6.17]{Cis03} shows that for a right proper model category $\mathcal{D}$ where the cofibrations are the monomorphisms, a similar construction
like  in the Lemma may even be used to construct a model category structure on $\Fun(I, \mathcal{D})$ itself in which the weak equivalences are the point-wise ones. Probably a similar statement is true in the fibered situation, but we have not checked this.  
\end{BEM}

\commentempty{
\begin{PROP}
Let $\DD \rightarrow \SSS$ be a right fibered multiderivator (sat. FDer0 left as well) with domain $\Dir$. 
If $I$ has an initial object $i$, we have that the extraction of $(\Delta_0, i)$ 
\[ \DD(N(I))_{\pi_I^*S}^{\cocart} \leftarrow \DD(\cdot) \]
is an equivalence.
\end{PROP}
\begin{proof}
We first prove that the fully-faithful inclusion
\[ \DD(N(I))_{\pi_I^*S}^{\pi_I-\cart} \leftarrow \DD(N(I))_{\pi_I^*S}^{\cocart} \]
has a right adjoint $\Box_*$.

Finally look at: 
\[ \xymatrix{
\DD(N(I))^{\cocart}_{\pi_I^*S} \ar[r]^-{(\Delta_0, i)^*} & \DD(\cdot)_{S(i)} \ar[r]^-{f_\bullet \pi_I^*}& \DD(N(I))_{\pi_I^*S}^{\cart}  
} \]
By construction the composition is the right coCartesian projector, hence an equivalence. The other composition
\[ \xymatrix{
\DD(\cdot)_{S(i)} \ar[r]^-{f_\bullet \pi_I^*} & \DD(N(I))^{\cocart}_{\pi_I^*S}   \ar[r]^-{(\Delta_0, i)^*} &  \DD(\cdot)_{S(i)}
} \]
is an isomorphism for trivial reasons. This establishes the equivalence.  
\end{proof}

\begin{KOR}
If $I$ is a diagram with initial object, we have an isomorphism of functors
\[ \pi_* f^\bullet = (\Delta_0, i)^* : \DD(N(I))^{\pi_I-\cart}_{\pi_I^*S} \rightarrow \DD(\cdot)_{S(i)} \]
\end{KOR}
\begin{proof}
We have the diagram
\[ \xymatrix{
 \DD(\cdot)_{S(i)} \ar[r]^{f_\bullet \pi_I^*} & \DD(N(I))^{\cocart}_{\pi_I^*S} \ar[r]^{\mathrm{incl}}   &  \DD(N(I))^{\pi_I-\cart}_{\pi^*S} 
} \]
and we know that the left functor is an equivalence and the right adjoint of the right functor is given by $(\Delta_0, i)^*$ followed by $f_\bullet \pi_I^*$.

\end{proof}

}

\begin{PROP}\label{PROPLEFTRIGHT}
Let $\Dia' \subset \Dia$ be two diagram categories and $N$, and $\widetilde{N}$, be two functors as in \ref{PARAXIOMS} satisfying (N1--3) and (N4--5 left), (resp.\@ (N4--5 right)). 
And suppose, in addition, that for each $I \in \Dia$ the diagram $N(I) \times_{/I} \widetilde{N}(I)$ is in $\Dia'$ as well. Let $\DD \rightarrow \SSS$ be a left and right fibered multiderivator such that $\SSS$ extends to all of $\Dia$. 
Then we have an equivalence of categories
\[  \DD(N(I))^{\pi_I-\cart}_{\pi_I^*S} \cong \DD(\widetilde{N}(I))^{\widetilde{\pi}_I-\cart}_{\widetilde{\pi}_I^*S}  \]
compatible with pull-back along functors $\alpha: I \rightarrow J$ and, for all morphisms $f$ in $\SSS(I)$, intertwining push-forward along $\pi_I^*f$ with that along $\widetilde{\pi}_I^*f$. 
\end{PROP}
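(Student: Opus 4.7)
The plan is to build the equivalence through the intermediate category $C := N(I) \times_{/I} \widetilde{N}(I)$, which by hypothesis lies in $\Dia'$. Denote by $\mu: \pi_I \circ \pr_1 \Rightarrow \widetilde{\pi}_I \circ \pr_2$ the canonical 2-cell coming from the comma-category structure, and for $S \in \SSS(I)$ set $\mu_S := \SSS(\mu)(S): \pr_2^* \widetilde{\pi}_I^* S \to \pr_1^* \pi_I^* S$, which is a 1-ary morphism in $\SSS(C)$.

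First I would establish two intermediate equivalences, in the spirit of Proposition~\ref{PROPCARTPROJ2}. By (N4 right) for $\widetilde{N}$ the projection $\pr_1: C \to N(I)$ is an opfibration whose fiber over $n$ is $\widetilde{N}(\pi_I(n) \times_{/I} I)$; since $\pi_I(n) \times_{/I} I$ has an initial object, (N5 right) applied to a suitable fiber pre-derivator shows that $\pr_1^*$ identifies $\DD(N(I))^{\pi_I-\cart}_{\pi_I^*S}$ with $\DD(C)^{(\pi_I \pr_1)-\cart}_{\pr_1^* \pi_I^*S}$, the inverse being given by the Cartesian projector built out of $\pr_{1,!}$ (which exists by Proposition~\ref{PROPCARTPROJ}). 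Symmetrically, (N4 left) for $N$ makes $\pr_2: C \to \widetilde{N}(I)$ a fibration with fibers $N(I \times_{/I} \widetilde{\pi}_I(\tilde n))$, and (N5 left) identifies $\DD(\widetilde{N}(I))^{\widetilde{\pi}_I-\cart}_{\widetilde{\pi}_I^*S}$ with $\DD(C)^{(\widetilde{\pi}_I \pr_2)-\cart}_{\pr_2^* \widetilde{\pi}_I^*S}$.

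Next I would glue these two equivalences using the twist $\mu_S^\bullet$ (or equivalently its left adjoint $(\mu_S)_\bullet$), which restricts to an equivalence between $\DD(C)^{(\pi_I\pr_1)-\cart}_{\pr_1^* \pi_I^*S}$ and $\DD(C)^{(\widetilde{\pi}_I\pr_2)-\cart}_{\pr_2^* \widetilde{\pi}_I^*S}$ (on the intersection where both notions of Cartesian coincide, since a morphism in $C$ mapping to an identity in $I$ does so under both $\pi_I \pr_1$ and $\widetilde{\pi}_I \pr_2$ by naturality of $\mu$). Composing the three equivalences delivers the desired $\DD(N(I))^{\pi_I-\cart}_{\pi_I^*S} \cong \DD(\widetilde{N}(I))^{\widetilde{\pi}_I-\cart}_{\widetilde{\pi}_I^*S}$, with inverse assembled from $\pr_{1,!}$, $\pr_{2,*}$, $(\mu_S)_\bullet$, $\mu_S^\bullet$ and the left/right $\pi$-Cartesian projectors. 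The naturality of the equivalence in $\alpha: I \to J$ and in morphisms $f \in \Hom_{\SSS(I)}(S_1,\dots,S_n;T)$ then follows from the 2-functoriality of the comma construction, the coherence of $\mu$ under pull-back, and the compatibility of Cartesian projectors with push-forwards along morphisms of $\SSS$ already exploited in the proof of Theorem~\ref{MAINTHEOREM}.

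The main obstacle I anticipate is in the first step: verifying rigorously that $\pr_1^*$ identifies the two Cartesian subcategories. This is a monadicity argument of the type of Lemma~\ref{BASICLEMMA}~(4), but now applied to the opfibration $\pr_1$ with \emph{varying} non-trivial fibers, where the (N5 right) adjunction has to be applied pointwise on fibers and assembled coherently in a family. The dual statement for $\pr_2^*$ is analogous. Once these are in hand, the remaining bookkeeping—in particular writing down the exchange isomorphisms needed to check that the twist by $\mu_S$ intertwines the two pull-back equivalences in a way compatible with $\alpha^*$ and with push-forwards—is essentially the same diagram chase that occurs in the proof of Proposition~\ref{PROPCARTPROJ2}.
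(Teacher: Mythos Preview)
Your plan routes the comparison through $C = N(I) \times_{/I} \widetilde{N}(I)$, as does the paper, but the three-step decomposition has a real gap at step~3. The claim that ``a morphism in $C$ mapping to an identity in $I$ does so under both $\pi_I \pr_1$ and $\widetilde{\pi}_I \pr_2$'' is false: these two functors $C \to I$ are only related by the 2-cell $\mu$, not equal. Fix $n$ with $\pi_I(n)=i$ and take any morphism $\tilde{n}_1 \to \tilde{n}_2$ in $\widetilde{N}(I)$ lying over a non-identity arrow $i \to i'$ (such exist by (N3)); together with $f_1 = \id_i$ and $f_2 = (i \to i')$ this yields a morphism in $C$ mapping to $\id_i$ under $\pi_I\pr_1$ but to $i \to i'$ under $\widetilde{\pi}_I\pr_2$. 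Hence the full subcategories of $(\pi_I\pr_1)$-Cartesian and of $(\widetilde{\pi}_I\pr_2)$-Cartesian objects in $\DD(C)$ are cut out by genuinely different conditions, and the twist $\mu_S^\bullet$ --- which only changes the base in $\SSS(C)$, not the projection to $I$ used to test Cartesianness --- does not carry one onto the other. Identifying these two subcategories is essentially the content of the proposition itself, so step~3 cannot be as formal as you suggest.

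The paper avoids this by not factoring through subcategories of $\DD(C)$ at all: it writes down the single adjunction
\[ \pr_{2,!}\, \SSS(\mu)_\bullet\, \pr_1^* \ \dashv\ \pr_{1,*}\, \SSS(\mu)^\bullet\, \pr_2^* \]
directly between $\DD(N(I))_{\pi_I^*S}$ and $\DD(\widetilde{N}(I))_{\widetilde{\pi}_I^*S}$ and then verifies, pointwise at each $\widetilde{n}$ (resp.\ $n$), that the counit (resp.\ unit) is an isomorphism on $\widetilde{\pi}_I$-Cartesian (resp.\ $\pi_I$-Cartesian) objects. After a short diagram chase, each pointwise verification reduces --- via (N4 left) for $N$ and (N4 right) for $\widetilde{N}$ --- to two applications of Lemma~\ref{BASICLEMMA}~(3), one in its left form and one in its right form. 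This is precisely where the left hypothesis on $N$ and the right hypothesis on $\widetilde{N}$ interact, doing the work that your step~3 tried to make trivial.

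As a minor side remark: $\pr_1$ is a fibration and $\pr_2$ an opfibration, so even in your steps~1 and~2 the natural candidates for inverses are built from $\pr_{1,*}$ and $\pr_{2,!}$ respectively, not the other way round.
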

\begin{proof}
Consider the adjunction 
\begin{equation}\label{eqadj}
 \vcenter{ \xymatrix{ \DD(N(I))_{\pi_I^*S} \ar@/^20pt/[rr]^{\pr_{2,!} \SSS(\mu)_\bullet \pr_1^*} && \ar@/^20pt/[ll]^{\pr_{1,*} \SSS(\mu)^\bullet \pr_2^*}  \DD(\widetilde{N}(I))_{\widetilde{\pi}_I^*S} }  } 
  \end{equation}
induced by the following 2-commutative diagram:
\[ \xymatrix{
N(I) \times_{/I} \widetilde{N}(I) \ar@{}[rd]|{\Nearrow^\mu} \ar[r]^-{\pr_2} \ar[d]_{\pr_1} & \widetilde{N}(I) \ar[d]^{\widetilde{\pi}_I} \\
N(I)  \ar[r]_{\pi_I} & I
} \]
It suffices to show that the unit (resp.\@ the counit) of the adjunction are isomorphisms on $\pi_I$-Cartesian (resp.\@ $\widetilde{\pi}_I$-Cartesian) objects. 
We concentrate on the counit (the unit case is analogous) and show that 
\[ c: \pr_{2,!} \SSS(\mu)_\bullet \pr_1^* \pr_{1,*} \SSS(\mu)^\bullet \pr_2^* \rightarrow \id \]
is an isomorphism on $\widetilde{\pi}_I$-Cartesian objects. It suffices to see this after pull-back to any $\widetilde{n}$. Hence we have to show that
\[\widetilde{n}^* c:  \widetilde{n}^* \pr_{2,!} \SSS(\mu)_\bullet \pr_1^* \pr_{1,*} \SSS(\mu)^\bullet \pr_2^* \rightarrow \widetilde{n}^*  \]
is an isomorphism. 
Consider also an object $n \in N(I)$ mapping to the same $i \in I$ as $\widetilde{n}$. They give rise to an element $\kappa = (n, \widetilde{n}, \id_i) \in  N(I) \times_{/I} \widetilde{N}(I)$ and we may rewrite the morphism as
\[ \kappa^* \pr_2^* \pr_{2,!} \SSS(\mu)_\bullet \pr_1^* \pr_{1,*} \SSS(\mu)^\bullet \pr_2^* \rightarrow \kappa^* \pr_2^*. \]
Consider the following commutative diagram of functors and natural transformations (all given by units and counits of the obvious adjunctions):
\[ \xymatrix{
&&  \kappa^* \pr_2^*  \\
\kappa^* \pr_2^* \pr_{2,!} \SSS(\mu)_\bullet \pr_1^* \pr_{1,*} \SSS(\mu)^\bullet \pr_2^*   \ar[r]  \ar[rru]^{\kappa^* \pr_2^* c} & \kappa^* \pr_2^* \pr_{2,!}  \SSS(\mu)_\bullet \SSS(\mu)^\bullet  \pr_2^*  \ar[r] & \kappa^* \pr_2^* \pr_{2,!}   \pr_2^*  \ar[u] \\
\kappa^* \SSS(\mu)_\bullet \pr_1^* \pr_{1,*} \SSS(\mu)^\bullet \pr_2^* \ar[u]^{\numcirc{1}} \ar[r] \ar@/_20pt/[rr]_{\numcirc{2}} & \kappa^* \SSS(\mu)_\bullet  \SSS(\mu)^\bullet \pr_2^*  \ar[u]  \ar[r] & \kappa^*  \pr_2^*  \ar@/_50pt/[uu]_{\id}  \ar[u]
} \]
It suffices to verify that $\numcirc{1}$ and $\numcirc{2}$ are both isomorphisms on $\widetilde{\pi}_I$-Cartesian objects. 

$\numcirc{1}$: It is clear that both functors in the adjunction (\ref{eqadj}) have image in $\pi_I$-Cartesian (resp.\@ $\widetilde{\pi}_I$-Cartesian) objects. Hence is suffices to see that 
\[ \kappa^*  \SSS(\mu)_\bullet \pr_1^*  \rightarrow \kappa^* \pr_2^* \pr_{2,!} \SSS(\mu)_\bullet \pr_1^*  \]
is an isomorphism on $\widetilde{\pi}_I$-Cartesian objects. This is the same as the natural morphism
\[ n^*  \rightarrow p_! \SSS(\mu)_\bullet \iota^*  \]
induced by the 2-commutative diagram
\[ \xymatrix{
N(I\times_{/I} i)  \ar@{}[rd]|{\Nearrow^\mu} \ar[r]^-{p} \ar[d]_{\iota} & i \ar[d]^{} \\
N(I)  \ar[r]_{\pi_I} & I
} \]
and this is an isomorphism by Lemma~\ref{BASICLEMMA}, 3 (left case).

$\numcirc{2}$: We may rewrite $\numcirc{2}$ as:
\[ n^* \pr_{1,*} \SSS(\mu)^\bullet \pr_2^* \rightarrow \widetilde{n}^* \]
and have to show that
it is an isomorphism on $\widetilde{\pi}_I$-Cartesian objects. This is the same as the natural morphism
\[ p_* \SSS(\mu)_\bullet \iota^*   \rightarrow  \widetilde{n}^* \]
for the 2-commutative diagram
\[ \xymatrix{
 \widetilde{N}(i \times_{/I} I) \ar@{}[rd]|{\Nearrow^\mu} \ar[r]^-{\iota} \ar[d]_{p} & \widetilde{N}(I) \ar[d]^{\widetilde{\pi}_I} \\
i  \ar[r] & I
} \]

and this is an isomorphism by Lemma~\ref{BASICLEMMA}, 3 (right case).
\end{proof}

\newpage
\bibliographystyle{abbrvnat}
\bibliography{6fu}

\end{document}